\newtheorem{theorem}{Theorem}[section]
\newtheorem{lemma}[theorem]{Lemma}
\newtheorem{corollary}[theorem]{Corollary}
\newtheorem{proposition}[theorem]{Proposition}
\theoremstyle{definition}
\newtheorem{definition}[theorem]{Definition}
\newtheorem{example}[theorem]{Example}
\theoremstyle{remark}
\newtheorem{remark}[theorem]{Remark}
\theoremstyle{Conjecture/open problem}
\def\a{\alpha}
\def\la{\lambda}
\def\a{\alpha}
\def\be{\beta}
\def\Ga{\Gamma}
\def\1{\mathbf{1}}
\def\la{\lambda}
\def\p{^{\prime}}
\def\k{\kappa}
\def\cG{\mathcal{G}}
\def\cR{\mathcal{R}}
\def\cS{\mathcal{S}}
\def\cC{\mathcal{C}}
\def\R{\mathbb{R}}
\def\Z{\mathbb{Z}}
\DeclareMathOperator*{\too}{\to}
\DeclareMathOperator*{\rightleftharpoon}{\rightleftharpoons}
\numberwithin{equation}{section}
\begin{document}

\title{Stationary distributions via decomposition of stochastic reaction networks}
\hfill\break
%Stability of classes of RN under different reduction operations}
%JOINING AND DECOMPOSING STOCHASTIC REACTION NETWORKS

\author[Linard Hoessly]{Linard Hoessly}
\address{Department of Mathematical Sciences, University of Copenhagen, Denmark}
\email{hoessly@math.ku.dk}
\date{}
\subjclass[2010]{60J28,60K35,80A30,82C20,92C42,92B05,92E20}
\keywords{Stochastic reaction networks, mass-action system, product-form stationary distributions, Markov process, Continuous-time Markov process}

\begin{abstract}
We examine reaction networks (CRNs) through their associated continuous-time Markov processes. Studying the dynamics of such networks is in general hard, both analytically and by simulation. In particular, stationary distributions of stochastic reaction networks are only known in some cases. We analyze class properties of the underlying continuous-time Markov chain of CRNs under the operation of join and examine conditions such that the form of the stationary distributions of a CRN is derived from the parts of the decomposed CRNs. The conditions can be easily checked in examples and allow recursive application. The theory developed enables sequential decomposition of the Markov processes and calculations of stationary distributions. Since the class of processes expressible through such networks is big and only few assumptions are made, the principle also applies to other stochastic models. %(cf., i.e., \cite{hoesslysta} for one instance). 
We give examples of interest from CRN theory to highlight the decomposition.
\end{abstract}

\maketitle

\section{Introduction}
%\subsection{Chemical reaction networks}\hfill\break
Reaction networks (CRNs) form a broadly applicable paradigm to describe the interactions of different constituents through mathematical models.
 CRNs are vital 
 for the prediction and analysis of data in biochemistry, systems biology and cellular biology, 
and have found further applications \cite{ecol_May,sociology,Goutsias_ov}. 
Besides their relevance in applications, 
CRNs continue to drive the development of areas of mathematics such as
dynamical systems theory, stochastic processes and applied algebraic geometry \cite{anderson2,overv_Gorban}.

A CRN consists of reactions with associated reaction rates that govern the speed of the reactions. CRNs are often defined via the reaction graph, that highlights the interactions between species and their transformations.
As an example consider the enzymatic Michaelis-Menten mechanism, where an enzyme $E$ catalyzes the conversion of a substrate $S$ into a product $P$ through an intermediate molecule $ES$: 
\begin{equation}\label{react_1}S+E\rightleftharpoons ES\to P+E.\end{equation}
Either a deterministic or a stochastic model is chosen 
 to represent the dynamics of CRNs. Traditionally, deterministic models have been the preferred modelling choice. However, with the emergence of systems biology, cellular biology and synthetic biology the importance of modelling systems with small molecular counts have become important.
Stochastic models of CRNs are used when the molecular counts in the system are low. They typically consist of continuous time Markov chains (CTMC), which % in $\Z_{\geq 0}^s$. 
apply to many processes in living systems \cite{Gardiner,Goutsias_ov,Mely_Khammash}.
Furthermore the efficient mathematical
analysis of their stochastic properties is an invaluable tool for their application. Two realms of investigation are generally of interest for such % progressing
 systems. The \emph{transient behaviour} describes the time-dependent dynamics, whereas the \emph{stationary behaviour} describes  the dynamics in the long term after the system has reached an equilibrium.
\hfill\break

Studying the dynamics of stochastic CRNs is difficult in general, and so they are often examined via simulations \cite{gillespieb}. 
The stationary behaviour and its characterization are typically analysed via the master equation. In many cases, the stationary behavior of Markov chains can be described through their stationary distribution. Exact solutions for the stationary distribution (if it exists) are not known for most systems, except for some special cases. Complex balanced reaction networks are fairly well understood by now.
Deterministic complex balanced CRN have their stochastic counterparts with product-form stationary distributions of Poisson-type \cite{anderson2}.
The reverse statement is essentially also true: a stochastic CRN with product-form stationary distribution of Poisson-type (on any irreducible component)  is complex balanced \cite{Cappelletti}. Complex balanced CRNs are in particular weakly-reversible. Apart from that, there are some results on form of stationary distributions of non-weakly reversible reaction networks, like, e.g. autocatalytic CRN \cite{hoesslysta}. 

Here, we study unions (or, joins) of reaction networks in the stochastic setting. Our main focus is the form and existence of stationary distributions. While \cite{hoesslysta} focussed on a particular class of interest of non-weakly reversible CRNs with applications in particle systems, life sciences and condensation, we generalise here the underlying proof principle for stationary distributions. We give tools to systematically find the stationary distributions for the joined CRN, given the stationary distributions of the smaller CRNs. To be more precise, in CRNs where the stationary distributions of the decompositions are of product form and concur in the species in common, we can derive the stationary distribution of the full CRN from its parts. These are sufficient conditions, and examples can come from any combination of CRNs as long as the stationary distributions are of product form and satisfy some condition on the state spaces. Since the class expressible through such networks is big (i.e. interacting particle systems, cf., e.g., \cite{ligget}), the principle also applies to other stochastic models. As an example, consider \cite{hoesslysta} for the relation to the inclusion process. In particular, autocatalytic CRNs and more general non-weakly reversible as well as some weakly reversible (including all complex balanced) CRNs fall under the framework we consider. 

\color{black}
One result is then that given a reaction network $\cG$ that can be decomposed as a reaction-disjoint union $\cG=\cG_1 \cup \cG_2$, with $\cG_1,\cG_2$ essentials and of product form stationary distributions such that the product-form functions agree in the species in common, the stationary distribution of $\cG$ is of product form and derived from $\cG_1,\cG_2$ under a summability condition.
\color{black}

As an illustration consider the following CRN with Mass-action kinetics.

% this then allows, e.g., the derivation of the stationary distribution (for all rate constants) for the following CRN with \color{black}Mass-action kinetics which is reversible and of deficiency two  (and others, cf., i.e., Example \ref{ex_class2} \color{black} or Remark \ref{comp_cx})
$$
 S_1 \rightleftharpoon^{\k_1}_{\k_2} S_2 \rightleftharpoon^{\k_5}_{\k_6} S_3,\quad 2S_1 \rightleftharpoon^{\k_3}_{\k_4} S_1+S_2,\quad 2S_3 \rightleftharpoon^{\k_7}_{\k_8} S_2+S_3
 $$
 Then, taking as $\cG_1$ the reactions between $S_1,S_2,$ and $\cG_2$ the reactions between $S_2,S_3$, we can apply the above to derive the product-form stationary distribution of $\cG=\cG_1 \cup \cG_2$ for all positive rate constants. The stationary distribution is (see Example \ref{ex_class2})
%$$
 %S_1 \rightleftharpoon S_2\rightleftharpoon S_3,\quad 2S_1 \rightleftharpoon S_1+S_2, \quad  2S_3 \rightleftharpoon S_2+S_3
 %$$
% with product-form stationary distributions
 $$\pi(x_1,x_2,x_3)=\frac{1}{Z}f_1(x_1)f_2(x_2)f_3(x_3),$$
 where the product form functions are
 % $$\pi(x_1,x_2,x_3)=\frac{1}{Z}\frac{1}{x_1!x_2!x_3!}\prod_{l=1}^{x_1} \frac{\k_2+\k_4(l-1)}{\k_1+\k_3(l-1)}\prod_{l=1}^{x_3}\frac{\k_6+\k_8(l-1)}{\k_5+\k_7(l-1)}$$
 $$f_1(x_1)=\frac{1}{x_1!}\prod_{l=1}^{x_1} \frac{\k_2+\k_4(l-1)}{\k_1+\k_3(l-1)} ,\quad f_2(x_2)=\frac{1}{x_2!},\quad f_3(x_3)=\frac{1}{x_3!}\prod_{l=1}^{x_3} \frac{\k_6+\k_8(l-1)}{\k_5+\k_7(l-1)}.$$
 As the overall CRN $\cG=\cG_1 \cup \cG_2$ is reversible and of deficiency two, such examples show that weakly reversible non-complex balanced CRNs can have product-form stationary distributions.

As another example consider the next CRN that can be decomposed in a complex balanced (reactions between $S_3,S_5$ and between $S_1,S_3$) and a join of a non-weakly reversible and a weakly reversible non-complex balanced CRN(the rest)
 $$S_1\rightleftharpoons S_2 \rightleftharpoon S_3 \rightleftharpoons S_4, $$
$$S_1+S_2\to 2S_2,\quad S_2+S_3\to 2S_2, \quad 2S_4 \rightleftharpoon S_3+S_4$$
$$3S_3\rightleftharpoon 3S_5$$
$$\begin{tikzcd} [ row sep=1em,
  column sep=1em]
 2S_1 \arrow[rr, shift left=1ex,] 
%\arrow[dr,  shift left=1ex,"\la_3"] 
&& 2S_3 \arrow[dl, ] \\
& S_1+S_3\arrow[ul,]
\end{tikzcd}
$$ with product-form stationary distributions (see Example \ref{more_compl_cx})
$$\pi(x_1,x_2,x_3,x_4,x_5)=\frac{1}{Z}f_1(x_1)f_2(x_2)f_3(x_3)f_4(x_4)f_5(x_5)$$
with $f_1,f_3,f_5$ of Poisson-form, $f_2$ of a form from autocatalytic CRNs and $f_4$ as $f_1,f_2$ of the previous example.

\subsection*{Structure}
In Section \ref{CRN} we introduce basic definitions and terminology for reaction networks. Then we introduce the models for CRNs in Section \ref{modelCRN} and focus on the stochastic model by reviewing definitions, properties, and results on stationary distributions, where at the end we introduce unions of CRNs. In Section
\ref{stochCRN} we study stochastic CRNs under joins and give some results on extending the stationary distributions from smaller CRNs to their joins. Section \ref{stochCRNEx} introduces some examples to outline the application of the developed theory.%\hfill\break
\subsection*{Relation to existing approaches}
Previous approaches for extending analytical results on stationary distributions for reaction networks have focussed on gluing one state \cite{Mely_Khammash} or two states \cite{Mely_Pfa} of finite irreducible CTMCs.
\subsection*{Acknowledgements}
The author was supported by the Swiss National Science Foundation Early Postdoc.Mobility grant P2FRP2\_188023, and acknowledges the friendly hospitality of the department of Mathematics at the University of Copenhagen. We thank Christian Mazza, Carsten Wiuf and Chuang Xu for helpful discussions, \color{black}as well as three anonymous referees for valuable feedback and suggestions.
\color{black}

\section{Reaction networks}\label{CRN}
A \emph{reaction network} $\cG$ consists of a finite set of \emph{species} $\cS=\{S_1,\cdots,S_n\}$, a finite set of \emph{complexes}, and a finite set of \emph{reactions} $\cR$, which is then denoted as the triple $\cG=(\cS,\cC,\cR)$.

 We represent the complexes by vectors in $\Z^n_{\geq 0}$, and write reactions as $\nu\to \nu\p $, where we assume $\nu,\nu\p\in\cC$ and $\nu\neq\nu\p$ for all $\nu\to \nu\p \in\cR$.
 %Reactions consist of ordered tuples $(\nu, \nu\p)\in \cR$ with $\nu,\nu\p\in\cC$ which are typically written in the form $\nu\to \nu\p $.  
For a reaction $\nu\to \nu\p $, $\nu$ is called the \emph{reactant} and $\nu\p$ the \emph{product}. %We will mostly write complexes $\nu\in\Z_{\geq 0}^n$ in the form
%$\nu=\sum_{i=1}^n\nu_i S_i.$ %Accordingly we slightly abuse notation at times for complexes by identifying $\nu$ with $\sum_{i=1}^n\nu_i S_i$. 
Every reaction $\nu\to \nu\p$ has a positive \emph{rate constant} $\k_{\nu\to\nu\p}$ associated. Then, given the vector of reaction rates $\k\in\R_{>0}^\cR$, we denote
the CRN with rates by $(\cG,\k)$.
\subsection{Basic terminology}

We illustrate reaction networks by their \emph{reaction graph}, which is the directed graph obtained by taking the vertices $\cC$ and arrows $\cR$. Connected components of the reaction graph are called \emph{linkage classes}.  A CRN is \emph{reversible} if for every $\nu\to \nu\p \in\cR$, $ \nu\p\to \nu \in\cR$. A CRN is \emph{weakly reversible} if for any reaction $\nu\to \nu\p \in\cR$, there is a directed path in the reaction graph that begins with $\nu\p$ and ends in $\nu$. The \emph{molecularity} of a reaction $\nu\to \nu\p \in\cR$ is equal to the number of molecules in the reactant $|\nu|=\sum_i \nu_i$, and correspondingly we say such reactions are unimolecular, bimolecular, three-molecular or $n$-molecular. The \emph{stochiometric subspace} spans a subspace of $\mathbb{R}^n$ and is given as $\mathcal{T}={\rm span}_{\nu\to\nu\p\in\cR}\{\nu-\nu\p\}\subset \mathbb{R}^n$.
The \emph{deficiency}  of a reaction network $\cG$ is given by
 $\delta=|\cC|-\ell-{\rm dim}(\mathcal{T}),$
 where $\ell$ is the number of linkage classes.
 A CRN  $\cG$  is \emph{conservative} if there is a vector $c\in \R^\cS_{>0}$ such that for any reaction $\nu\to\nu\p\in \cR$ we have $\sum_{i\in\cS}\nu_i c_i=\sum_{i\in\cS}\nu\p_i c_i$.
 % where $\nu,\nu\p\in\N^\cS.$
  \color{black}

  \section{Models and kinetics for reaction networks}\label{modelCRN}
%  \color{red}
  %\todo{shorten}
  %\subsection{Deterministic model}\label{det_model}
  %The deterministic system 
%modelling the concentrations of the species over time  for a CRN is defined by an ODE associated to $(\cG,\k)$. In the case of mass-action kinetics, this takes the following form
%$$\frac{d}{dt}x(t)=\sum_{\nu\to\nu\p\in \cR}\k_{\nu\to\nu\p}x(t)^\nu(\nu\p-\nu),\quad  x(0)\in \R_{\geq 0}^n,$$
%where $n$ is the number of species, and for $a,b\in \R_{\geq 0}^n$ we define $a^b= \prod_{i=1}^n a_i^{b_i}$ with the convention $0^0=1$. The constant $\k_{\nu\to\nu\p}$ is the mass-action reaction rate constant of the reaction $\nu\to\nu\p$ and the vector $\nu\p-\nu$ represents the net production of the species in the reaction. The stationary behaviour is usually the main focus and often entails characterising stable fixed points.\hfill\break

%A reaction network $(\cG,\k)$ with deterministic mass-action kinetics is called \emph{detailed balanced} if and only if there exists a point $a\in \R^n_{>0}$ such that for all $\nu\to\nu\p\in\cR$, $\nu\p\to\nu\in\cR$ it holds that
%$\k_{\nu\to\nu\p}a^\nu=\k_{\nu\p\to\nu}a^{\nu\p}.$
%It is called \emph{complex balanced} if and only if there exists a point $a\in \R^n_{>0}$ such that for all $\nu\in \cC$
%it holds that $\sum_{\nu\to\nu\p\in\cR}\k_{\nu\to\nu\p}a^\nu=\sum_{\nu\p\to\nu\in\cR}\k_{\nu\p\to\nu}a^{\nu\p}.$
%If a CRN is detailed balanced then it is reversible. If it is complex balanced, then it is weakly reversible. For more on deterministic CRNs we refer, e.g., to \cite{feinberg2}.

%\color{black}
  \subsection{Stochastic model}\label{stoch_model}
 The progression of species counts is described  
by a vector $X(t)=x\in \Z_{\geq 0}^n$, which changes according to the 'firing' of the reactions $\nu\to \nu\p$  by jumping from $x$ to $x+\nu\p-\nu$  with transition intensity $\la_{\nu \to \nu\p}(x)$.
The Markov process with intensity functions $\la_{\nu \to \nu\p}:\Z_{\geq 0}^n\to \R_{\geq 0}$ can then be given by
$$P(X(t+\Delta t)=x+\nu\p-\nu|X(t)=x)=\sum_{\nu\to\nu\p\in \cR|-\nu + \nu\p=\xi}\la_{\nu \to \nu\p}(x)\Delta t+ o(\Delta t),$$
with the generator $\mathcal{A}$ acting by
$$\mathcal{A}h(x)=\sum_{\nu \to \nu\p\in \cR} \la_{\nu \to \nu\p}(x)(h(x+\nu\p-\nu)-h(x)),$$
for $h:\Z^n\to \R$.

The transition intensity under mass-action kinetics (more general kinetics are possible as well \cite{anderson2,non-stand_2}) associated to the reaction $\nu\to\nu\p$  is 
\begin{equation}
\label{int}\la_{\nu \to \nu\p}(x)=\k_{\nu\to\nu\p}\frac{(x)!}{(x-\nu)!}\1_{x\geq \nu},
\end{equation}
\color{black}where $z!:=\prod_{i=1}^nz_i!\text{ for } z\in \Z^n_{\geq 0},$ and $x\geq \nu$ if and only if this holds for every component, i.e. $x_i\geq\nu_i$ $\forall S_i\in \cS$.

 \color{black}General inquiry into stochastic CRNs proceeds by inspection of the underlying CTMC. After identifying the class structure and the (so-called) stoichiometric compatibility classes where the dynamics is confined to, the state space is decomposed into different types of states ( cf., i.e., \cite{norris}). On irreducible components, positive recurrence is equivalent to non-explositivity together with existence of an invariant distribution \cite{norris}.

The classification and description of the stochastic behaviour of CRNs is complex. Many interesting results were investigated, like positive recurrence \cite{andersonpos,Tier},  non-explositivity of complex balanced CRN \cite{non-expl}, extinction/absorption events \cite{extinction,quasi_stat}, quasi-stationary distributions \cite{quasi_stat} or the classification of states of stochastic CRNs \cite{xu2019dynamics}. However, even in situations where theorems apply, we are far from a complete characterization, see \cite{andersonpos,non-expl,extinction,quasi_stat,xu2019dynamics} for examples.

 We next introduce some terminology for stochastic reaction networks. A reaction $y\to y\p$ is \emph{active} on $x\in \Z_{\geq 0}^n$ if $x\geq y$. Similarly a reaction $y\to y\p$ is \emph{active} on a set $A\subseteq  \Z_{\geq 0}^n$ if there is a state $x\in A$ such that the reaction is active on $x$. This will mostly be used for $A=\Ga$ an irreducible component. A state $u\in \Z_{\geq 0}^n$ is \emph{accessible} from $x\in \Z_{\geq 0}^n$ if it can be reached from $x$ via the underlying CTMC.
 We will denote this by $x\to_{\cG}u$.

 A non-empty set $\Ga\subset\Z_{\geq 0}^n$ is an \emph{irreducible component} of $\cG$ if for all $x\in \Ga$ and all $u\in  \Z_{\geq 0}^n$, $u$ is accessible from $x$ if and only if $u\in \Ga$.

  %An irreducible component $\Ga$ is \emph{positive} if for all reactions there is a $x\in \Ga$ such that the reaction is active on that state.
 We say $\cG$ is \emph{essential} if the state space is a union of irreducible components, and
% -$\cG$ is \emph{essential} if the state space is a union of irreducible components.\\
 $\cG$ is \emph{almost essential} if the state space is a union of irreducible components except for a finite number of states.
% \begin{remark} Note that irreducible components as above correspond to closed communicating classes from standard Markov theory \cite{norris}.
 %\end{remark} 
 %%%%%%%%%%%%%%%%%%%%%%%%%%%%%%%%%%%%%%%%%%%%%%%%%%%%%%%%%%%%%%%%
%%%%%%%%%%%%%%%%%%%%%%%%%%%%%%%%%%%%%%%%%%%%%%%%%%%%%%%%%%%%%%%%%
\subsection{Stationary distributions of reaction networks} \label{stat}
%%%%%%%%%%%%%%%%%%%%%%%%%%%%%%%%%%%%%%%%%%%%%%%%%%%%%%%%%%%%%%%%
%%%%%%%%%%%%%%%%%%%%%%%%%%%%%%%%%%%%%%%%%%%%%%%%%%%%%%%%%%%%%%%%%
Let $X(t)$ denote the underlying stochastic process associated to a reaction network on an irreducible component $\Ga$. Then, given that the stochastic process $X(t)$ is positive recurrent and starts in $\Ga$, we have that the limiting distribution is the stationary distribution, i.e.
$$\lim_{t\to\infty}P(X(t)\in A)=\pi_\Ga(A),\text{for any } A\subset\Ga.$$
In particular, if the underlying CTMC is positive recurrent, the stationary distribution $\pi_\Ga$ on an irreducible component $\Ga$ is unique and describes the long-term behavior ( cf., e.g. \cite{norris}). 

The stationary distribution is determined by the master equation of the underlying Markov chain:
\begin{equation} \label{master_eq1}
  \sum_{\nu\to\nu\p\in\cR} \pi(x+\nu-\nu\p)\la_{\nu \to \nu\p}(x+\nu -\nu\p)=\pi(x)\sum_{\nu\to\nu\p\in\cR}\la_{\nu \to \nu\p}(x),
  \end{equation}
for all $x\in\Ga$. A popular choice as rate function is mass-action kinetics, which then gives the following master equation:

\begin{equation} \label{master_eq2}  \sum_{\nu\to\nu\p\in\cR} \pi(x+\nu-\nu\p)\k_{\nu\to\nu\p}\frac{(x-\nu\p+\nu)!}{(x-\nu\p)!}\1_{x\geq \nu\p}=\pi(x)\sum_{\nu\to\nu\p\in\cR}\k_{\nu\to\nu\p}\frac{(x)!}{(x-\nu)!}\1_{x\geq \nu}.\end{equation}

Solving equation (\ref{master_eq1}) is in general a challenging task, even when restricting to the mass-action case. 
\begin{remark}\label{rmk_pos_cons}
Observe that for conservative CRNs the irreducible components are finite. Therefore the CTMC dynamics are positive recurrent (e.g., by Reuters criterion, c.f., e.g. \cite{norris}) on these irreducible components and the limiting distribution is the unique stationary distribution.{ \color{black}Recall in particular that for infinite CTMCs existence of stationary distribution does not imply positive recurrence, cf., e.g. \cite[Ex 3.5.4]{norris} or \cite{non-expl}. 
}
\end{remark}

\subsection{Known results on stationary distributions} \label{res_stat}
 Studying transient and stationary behaviour of reaction networks
are formidable tasks in general, and they are often examined via simulations \cite{gillespieb}. % Other available tools are the Poisson representation or representation by generating functions, both are in general difficult to handle \cite{gardiner}. The stationary behaviour and its characterization are typically studied via the master equation. 
Analytical solutions for the stationary distribution (if it exists) are not known for most systems, except for some special cases. 

Some stationary distributions of weakly reversible reaction networks are well-understood. Complex balanced CRNs have a Poisson product-form stationary distribution \cite{anderson2} and can even be characterized by that.
For $(\cG,\k)$ a complex balanced CRN and an irreducible component $\Ga$, the stochastic system has product-form stationary distribution of the form
 $$\pi(x)=M_\Ga \frac{c^x}{x!},x\in\Ga,$$
 where $c\in\R^n_{>0}$ is a point of complex balance, $c^x:=\prod_{S_i\in\cS}c_i^{x_i}$, and $M_\Ga$ is a normalizing constant.
% \begin{theorem}\label{thm_cxbal} Let $(\cG,\k)$ be a CRN that is complex balanced. Then for any irreducible component $\Ga$, the stochastic system has product-form stationary distribution of the form
 %$$\pi(x)=M_\Ga \frac{c^x}{x!},x\in\Ga,$$
 %where $c\in\R^n_{>0}$ is a point of complex balance and $M_\Ga$ is a normalizing constant.
% \end{theorem}
On the other hand, by \cite[Theorem 5.1]{Cappelletti} any almost essential stochastic reaction network with product-form stationary distribution of Poisson-type (i.e. in the form as above) is deterministically complex balanced. Notice that since complex balanced implies weakly reversible, these results do not apply to non-weakly reversible CRNs. Results on both product-form stationary distribution and connection to the deterministic system extend to non-mass action kinetics \cite{anderson2,non-stand_2}. 
Hence complex balanced CRNs are fairly well-understood.

For other classes of CRNs some results are also known \cite{hoesslysta}, i.e. so-called autocatalytic CRNs, a class of non-weakly-reversible CRNs also have product-form stationary distributions. Their product form functions come from an infinite family of functions, where the first one specializes to the Poisson form as above. So for
a autocatalytic CRN in the sense of \cite[$\S$ 3]{hoesslysta},
the stochastic dynamics
has the product-form stationary distribution
\begin{equation}\label{MainFormula}
\pi(x)=Z_\Ga^{-1}\prod_{S_i\in\cS}^{ }f_i(x_i) ,
\end{equation}
with product-form functions
$$f_i(x_i)=\lambda_i^{x_i}\frac{1}{x_i!}\prod_{l=1}^{x_i}(1+\sum_{k=2}^{n_i}\be_i^k \prod_{r=1}^{k-1}(l-r))$$ 
on its irreducible components ($\lambda_i$ and $\be_i^k$ are determined by the autocatalytic CRN, cf. \cite[$\S$ 3]{hoesslysta}) and with $Z_\Ga$ the normalising constant. Some other results on the stochastic behavior of CRN beyond complex balance are in \cite{bibb_stat} or \cite{irrev}.%\red{OK?}

%\begin{theorem}\label{prod_th}\cite{hoesslysta} Consider a stochastic autocatalytic CRN in the sense of \cite[$\S$ 3]{hoesslysta}.
%Then the reaction network
%possesses the product-form stationary distribution
%\begin{equation}\label{MainFormula}
%\pi(x)=Z_\Ga^{-1}\prod_{S_i\in\cS}^{ }f_i(x_i) ,
%\end{equation}
%with product-form functions
%$$f_i(x_i)=\lambda_i^{x_i}\frac{1}{x_i!}\prod_{l=1}^{x_i}(1+\sum_{k=2}^{n_i}\be_i^k \prod_{r=1}^{k-1}(l-r))$$ 
%on its irreducible components, where $\lambda_i$ and $\be_i^k$ are determined by the reaction rates of the autocatalytic CRN (cf. \cite[$\S$ 3]{hoesslysta}) and with $Z_\Ga$ the normalising constant. %\red{OK?}
%\end{theorem}
Beyond these results little is known concerning explicit stationary distributions.

\subsection{Balance equations for stationary distributions of CRNs}\hfill\break
We start with a general definition for balance equations, and recover some classical notions in Remark \ref{rmk_gen}.
\color{black}
The definition below essentially states that stationary distributions factorise according to a decomposition of the reactions of the underlying CRN \cite{hoesslysta}. 
\color{black}

\begin{definition}\label{tauto_def} Consider a CRN $(\cG,\k)$ with stochastic dynamics on $\Ga$ and $\pi$ a stationary distribution on $\Ga$.
We say $(\cG,\k)$ is generalized balanced for $\pi$ on $\Ga$ if there exists $\{(L_i,R_i)_{i\in A}\}$ a set of tuples of subsets of $\cR$ with $A$ an index set such that
$$\dot\bigcup_{i\in A}L_i=\dot\bigcup_{i\in A}R_i=\cR $$ such that for all $i\in A$ and all $x\in \Ga$ we have
\begin{equation}\label{gen_cxbal}
  \sum_{\nu\to\nu\p\in L_i} \pi(x+\nu-\nu\p)\la_{\nu \to \nu\p}(x+\nu -\nu\p)=\pi(x)\sum_{\nu\to\nu\p R_i}\la_{\nu \to \nu\p}(x).
\end{equation}
\end{definition}
\begin{remark} \label{rmk_gen}The notion of generalized balanced includes the following:
\begin{enumerate}
\item reaction balanced with index given by reactions, i.e. the tuples of subsets are $\{(\nu\to\nu\p,\nu\p\to\nu)_{\nu\to\nu\p\in\cR}\}$
\item complex balanced with index given by complexes, i.e. the tuples of subsets are defined for $C\in\cC$\hfill\break $L_C=\{\nu\to\nu\p\in\cR|\nu=C\},R_C=\{\nu\to\nu\p\in\cR|\nu\p=C\}.$
\item reaction vector balanced with index given by $a\in \Z^n$, i.e. the tuples of subsets are defined for $a\in \Z^n$\hfill\break $L_a=\{\nu\to\nu\p\in\cR|\nu-\nu\p=a\},R_a=\{\nu\to\nu\p\in\cR|\nu-\nu\p=-a\}.$
\end{enumerate}
but also combinations and other possibilities.
\end{remark}
In this paper, the following will be often used.
\begin{remark}\label{fact_union}
Let the reactions of a CRN be divided into \color{black}$\cR=\cR_1\cup \cR_2$\color{black}, then it might happen that the stationary \color{black}distribution \color{black}factorises through these reactions. More formally this corresponds to generalised balance with \color{black}$\{(\cR_i,\cR_i)_{i\in\{1,2\}}\}$\color{black}.
\end{remark}
Furthermore generalized balanced distributions on irreducible components give stationary distributions for the reaction network.
\begin{proposition}\cite{hoesslysta}\label{tauto_lemma}
If $(\cG,\k)$ is a CRN with stochastic dynamics on $\Ga$ that is generalized balanced for $\pi$, then $\pi$ is a stationary distribution for $(\cG,\k)$ on $\Ga$.
\end{proposition}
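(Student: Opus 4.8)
The plan is to show directly that $\pi$ satisfies the full master equation \eqref{master_eq1} on $\Ga$, by summing the generalized-balance equations \eqref{gen_cxbal} over the index set $A$. Concretely, fix $x \in \Ga$ and write down equation \eqref{gen_cxbal} for each $i \in A$; since $\{(L_i, R_i)_{i \in A}\}$ is a collection of tuples of subsets of $\cR$ with $\dot\bigcup_{i\in A} L_i = \dot\bigcup_{i \in A} R_i = \cR$, summing the left-hand sides over $i \in A$ reassembles exactly $\sum_{\nu \to \nu\p \in \cR} \pi(x + \nu - \nu\p)\la_{\nu \to \nu\p}(x + \nu - \nu\p)$, because each reaction $\nu \to \nu\p$ lies in exactly one $L_i$ (the union is disjoint). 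Likewise, summing the right-hand sides over $i \in A$ gives $\pi(x) \sum_{\nu \to \nu\p \in \cR} \la_{\nu \to \nu\p}(x)$, since each reaction lies in exactly one $R_i$. Equating the two sums yields precisely the master equation \eqref{master_eq1} at $x$.

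Once \eqref{master_eq1} holds for every $x \in \Ga$, we invoke the standard fact that a probability distribution on an irreducible component satisfying the master equation (equivalently, $\pi \mathcal{A} = 0$ when tested against the generator restricted to $\Ga$) is a stationary distribution for the associated CTMC; here $\pi$ is assumed to be a stationary distribution on $\Ga$ to begin with in the hypotheses of Definition \ref{tauto_def}, so the content is really that generalized balance is consistent with — indeed implies — the global balance equation, and no positivity or normalization issues arise beyond those already built into the definition. One should be a little careful that the index set $A$ may be infinite, so the interchange of summation is only an issue if the sums fail to converge; but the master equation sums are finite sums over $\cR$ (a finite set), and each term appears exactly once, so the rearrangement is a finite reindexing and no convergence subtlety arises.

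The only genuine point to check carefully is the bookkeeping of the disjoint unions: that $\dot\bigcup L_i = \cR$ as a \emph{disjoint} union guarantees every reaction $\nu \to \nu\p$ is counted once and only once on the left after summing over $i$, and similarly $\dot\bigcup R_i = \cR$ for the right. This is what makes the telescoping of the per-index equations into the single master equation exact rather than merely an inequality or an over/undercount. I do not anticipate a real obstacle: the statement is essentially a tautological reassembly, which is why the surrounding text flags it as such. The proof is a one-line summation argument plus a citation to the standard characterization of stationarity via the master equation on an irreducible component.
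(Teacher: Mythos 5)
Your proof is correct and is essentially the argument the paper relies on (the paper itself defers to \cite{hoesslysta}, where the proposition is proved by exactly this summation-over-$A$ reassembly of the master equation \eqref{master_eq1} from the per-index balance equations \eqref{gen_cxbal}, using disjointness of the unions $\dot\bigcup_{i\in A}L_i=\dot\bigcup_{i\in A}R_i=\cR$). Your side remarks are also apt: since $\cR$ is finite, only finitely many $L_i,R_i$ are nonempty, so no convergence issue arises even for infinite $A$, and the real content is that a normalized solution of \eqref{master_eq1} on $\Ga$ is an invariant distribution in the sense the paper uses.
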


  \subsection{Unions of reaction networks }\label{chpt_union}
   Here we look at the operation of combining two reaction networks. Such operations were already introduced and studied in the deterministic setting in \cite{Heather} where they studied the effects of combining reaction networks in the ODE setting with respect to identifiability, steady-state invariants, and multistationarity. While we will use the same framework, we study stationary properties of the stochastic model under combination and focus only on the two cases of reaction-disjoint and non-reaction-disjoint union.

We next introduce the formalisation of unifying reaction networks.% we start with the definition in terms of reaction networks and then treat the dynamics under unions.

\begin{definition} \label{def_union}
The {\em union} of reaction networks $\cG_1=(\cS_1, \cC_1, \cR_1)$ and $\cG_2=(\cS_2, \cC_2, \cR_2)$ is 
	\begin{align*}
	\cG_1 \cup \cG_2 ~:=~ \left( \cS_1 \cup \cS_2, ~ \cC_1 \cup \cC_2,~ \cR_1 \cup \cR_2 \right)~.
	\end{align*}
The union $\cG_1 \cup \cG_2$ can be built under different assumptions between the underlying reaction networks $\cG_1, \cG_2$. The following implications holds \cite{Heather}:
	\begin{align*}
	\cS_1 \cap \cS_2 = \emptyset \quad \Rightarrow \quad 
	\cC_1 \cap \cC_2= \emptyset {\rm~~or~~}  \cC_1 \cap \cC_2= \{0\} 
		 \quad \Rightarrow \quad
	 \cR_1 \cap \cR_2= \emptyset ~.
	\end{align*}	
	Consider now taking the union of CRNs with rates $(\cG_1\k_1),(\cG_1\k_1)$, i.e. with $\k_1\in\R_{>0}^{\cR_1},\k_2\in\R_{>0}^{\cR_2}$. We focus on the following two cases.
	\begin{enumerate}
	\item {\em Gluing reaction-disjoint networks:} If the two networks have no reactions in common (i.e., $\cR_1 \cap \cR_2= \emptyset$), then the vector of reaction rates of the union of the reaction networks is equal to $(\k_1,\k_2)\in\R_{>0}^{\cR_1\cup\cR_2}$.
	\item {\em Gluing over reactions:} If the two networks have at least one reaction in common (i.e., $\cR_1 \cap \cR_2 \neq \emptyset$), then the rates of the reactions of the union of the networks which are common reactions (i.e. in $\cR_1\cap\cR_2$) are the sum, i.e.,
 if
 $\nu\to\nu\p\in \cR_1$ with reaction rate $\k^1_{\nu\to\nu\p}$ and $\nu\to\nu\p\in \cR_2$ with reaction rate $\k^2_{\nu\to\nu\p}$, then the reaction rate of $\nu\to\nu\p\in \cR=\cR_1\cup \cR_2$
 is \\
 $\k_{\nu\to\nu\p}=\k^1_{\nu\to\nu\p}+\k^2_{\nu\to\nu\p}$.
	\end{enumerate}
\end{definition}

If the two species sets are disjoint ($\cS_1 \cap \cS_2 = \emptyset$), then the dynamics of the reaction networks $\cG_1$ and $\cG_2$ are independent of each other, hence some properties are directly determined by the dynamics on $\cG_1$ and $\cG_2$ (cf. Remark \ref{simplest_case} for more on this in the stochastic case).  
%We distinguish the following cases:
%\todo{Change relevant definitions motivated by results, examples and relate it to heathers paper}
% DEFINITION: 3 types of union

\begin{remark}\label{rem_union_not_closed} It is easy to see that both detailed balanced and complex balanced reaction networks are not closed under reaction-disjoint unions.
%, i.e. taking two detailed balanced or complex balanced CRNs and joining them will not necessarily lead to a detailed balanced or complex balanced CRN. Recall that detailed balance implies complex balance and it suffices to consider, 
Consider, e.g., the following example:
$$2A \rightleftharpoons A+B,\quad A+2B \rightleftharpoons 3B,$$
with $\cG_1$ the part with two-molecular reactions and $\cG_2$ as the three-molecular reactions. The deficiency of $\cG=\cG_1\cup \cG_2$ is equal to one, hence for almost all parameters it will not be complex balanced. However, it is easy to check that both $\cG_1$ and $\cG_2$ are detailed balanced and hence complex balanced by themselves.
\end{remark}

 \section{Stochastic reaction networks under joins}\label{stochCRN}
  \subsection*{Notation}

 Let $\cG=\cG_1 \cup \cG_2$ be a reaction network obtained from a union of networks as in \color{black}Definition \color{black}\ref{def_union}.
 We denote the projections by
 \begin{center}\hfill\break
  $p_1:\Z^{\cS_1\cup \cS_2}\to \Z^{\cS_1}$\break
 $ p_2:\Z^{\cS_1\cup \cS_2}\to \Z^{\cS_2}
  $\break
   $ p_{12}(=p_{21}):\Z^{\cS_1\cup \cS_2}\to \Z^{\cS_1\cap \cS_2}
  $\break
     $ p_{11}:\Z^{\cS_1\cup \cS_2}\to \Z^{\cS_1\setminus (\cS_1\cap \cS_2)}
  $\break
$ p_{22}:\Z^{\cS_1\cup \cS_2}\to \Z^{\cS_2\setminus (\cS_1\cap \cS_2)}
  $\break
  $p_{S_i}:\Z^{\cS_1\cup \cS_2}\to \Z$   

  \end{center}
  where $p_{S_i}$ is the projection to the $i^{th}$ component.
\subsection{Properties of stochastic dynamics under joins I}\label{stochCRN_join}
We first go through the case of a join where $\cS_1 \cap \cS_2 = \emptyset$ for the sake of exposition and to introduce the reader to the setting. For notations on CTMCs in the context of CRNs we refer to $\S$ \ref{stoch_model}, or, e.g., \cite{norris}.
   \begin{remark}\label{simplest_case} If $\cG=\cG_1 \cup \cG_2$ is such that $\cS_1 \cap \cS_2 = \emptyset$ and $x,y\in \Z_{\geq 0}^\cS$, then 
 $x\to y$ with dynamics of $\cG$ if and only if both $p_i(x) \to_{\cG_i} p_i(y)$ with dynamics of $\cG_i,i=1,2$. %and $p_2(x) \to p_2(y)$ with dynamics of $\cG_2$. 
The decomposition of state space with respect to irreducible components is simple. If $\Ga$ is an irreducible component of $\cG$, then $p_1(\Ga)$ and $p_2(\Ga)$ are irreducible components of $\cG_1$, $\cG_2$ such that $\Ga=p_1(\Ga)\times p_2(\Ga)$. So, for $\Ga$ a positive recurrent irreducible component we have
$$
\pi(x)=\pi_1(p_1(x))\pi_2(p_2(x)),
$$
where $\pi_1,\pi_2$ are the stationary distributions on $p_1(\Ga)$ and $p_2(\Ga)$ of $\cG_1$, $\cG_2$ (there is no normalizing factor since the CTMC is a product). It is easy to see that the stationary distribution on the irreducible component $\Ga$ is generalized balanced with \color{black}$\{(\cR_i,\cR_i)_{i\in \{1,2\}}\}$\color{black} (cf. Remark \ref{fact_union} and Theorem \ref{patch_CRN1} for a proof of a generalisation).
   \end{remark}
   
   \begin{remark}
   Even in the simplest setting of Remark \ref{simplest_case} we can not say much concerning class structure of an $x\in \Z_{\geq 0}^\cS$ given only information about the classes of $p_1(x)$ for $\cG_1$ and $p_2(x)$ for $\cG_2$ (cf., e.g., the simple symmetric random walk on $\Z^d$). In general, $x$ is surely transient for $\cG$ if $p_1(x)$ is transient for $\cG_1$ or $p_2(x)$ is transient for $\cG_2$.
   \end{remark}
  We next establish some simple correspondences for the decomposition of the state space where we omit the proofs.
     \begin{lemma}\label{ess_lemma} The following are equivalent for a CRN $\cG$:
 \begin{enumerate}
 \item \label{Prop0}  $\cG$ is essential.
   \item \label{Prop}For all $x\in \Z_{\geq 0}^\cS$ either there are no active reactions on $x$ or we have that $x\to_{\cG}x\p$ implies $x\p\to_{\cG}x$.
    \item \label{Prop3}For all $\nu\to \nu\p\in \cR$ we have $\nu\p\to_{\cG}\nu$ (i.e. $\nu$ is accessible from $\nu\p$ in $\cG$).
   \end{enumerate}
   \end{lemma}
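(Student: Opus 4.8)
The plan is to prove the cycle of implications $\eqref{Prop0}\Rightarrow\eqref{Prop}\Rightarrow\eqref{Prop3}\Rightarrow\eqref{Prop0}$, using only the definitions of \emph{essential}, \emph{irreducible component}, and \emph{accessible} given in $\S\ref{stoch_model}$, together with the elementary fact that accessibility $\to_{\cG}$ is reflexive and transitive.

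For $\eqref{Prop0}\Rightarrow\eqref{Prop}$, fix $x\in\Z_{\geq 0}^\cS$. If no reaction is active on $x$, there is nothing to prove, so assume some reaction fires at $x$ and let $x\to_{\cG}x\p$. By essentiality, the state space is a union of irreducible components, so $x$ lies in some irreducible component $\Ga$. Since $x\p$ is accessible from $x\in\Ga$, the defining property of an irreducible component forces $x\p\in\Ga$; applying the same property in the other direction (now using $x\in\Z_{\geq 0}^\cS$ is accessible from $x\p\in\Ga$ iff $x\in\Ga$, which holds) gives $x\p\to_{\cG}x$. For $\eqref{Prop}\Rightarrow\eqref{Prop3}$, take any reaction $\nu\to\nu\p\in\cR$. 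Then this reaction is active on the state $\nu$ itself (as $\nu\geq\nu$ componentwise), and firing it yields $\nu\to_{\cG}\nu+\nu\p-\nu=\nu\p$. Since there \emph{is} an active reaction on $\nu$, hypothesis \eqref{Prop} applies with $x=\nu$, $x\p=\nu\p$, giving $\nu\p\to_{\cG}\nu$.

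For the closing implication $\eqref{Prop3}\Rightarrow\eqref{Prop0}$, I would show that every $x\in\Z_{\geq 0}^\cS$ lies in an irreducible component, for instance the set $\Ga_x:=\{u\in\Z_{\geq 0}^\cS: x\to_{\cG}u\}$ of states accessible from $x$. One must check that $\Ga_x$ satisfies the defining property: for $y\in\Ga_x$ and arbitrary $u$, one has $y\to_{\cG}u$ iff $u\in\Ga_x$. The direction $u\in\Ga_x\Rightarrow y\to_{\cG}u$ is the nontrivial one and is where hypothesis \eqref{Prop3} is used: it suffices to show $u\to_{\cG}x$ for every $u\in\Ga_x$, since then $y\to_{\cG}u$ follows from $y\to_{\cG}x$ (obtained by reversing the path from $x$ to $y$) composed with $x\to_{\cG}u$. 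To see $u\to_{\cG}x$, decompose the path $x\to_{\cG}u$ into single reaction steps $x=z_0\to z_1\to\cdots\to z_m=u$, each $z_{j+1}=z_j+\nu_j\p-\nu_j$ via some active reaction $\nu_j\to\nu_j\p$; by \eqref{Prop3} each such step can be reversed, $z_{j+1}\to_{\cG}z_j$ — here one needs the observation that if $\nu\to\nu\p$ is active at $z_j$ then $\nu\p\to\nu$ is active at $z_{j+1}=z_j+\nu\p-\nu$ (because $z_{j+1}\geq\nu\p$), so that accessibility of $\nu$ from $\nu\p$ in $\cG$ translates into a genuine path from $z_{j+1}$ back to $z_j$ by adding the constant shift $z_j-\nu$. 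Concatenating these reversed steps gives $u\to_{\cG}x$.

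The main obstacle is the last implication, specifically the bookkeeping in showing that a \emph{single} reaction step can be reversed within the Markov chain: the hypothesis \eqref{Prop3} only gives accessibility of $\nu$ from $\nu\p$ at the "minimal" states $\nu\p$, and one must argue that the same sequence of reactions (translated by the ambient state) stays in $\Z_{\geq 0}^\cS$ and remains active at each intermediate state. This is the standard translation-invariance argument for reaction-network CTMCs — if a reaction is active at $z$ it is active at $z+w$ for any $w\in\Z_{\geq 0}^\cS$ — and the only subtlety is checking that the intermediate states along the path from $\nu\p$ to $\nu$ shifted by $z_j-\nu\p$ (equivalently by $z_{j+1}-\nu$ up to the stoichiometric vector) are indeed nonnegative, which follows because those intermediate states are $\geq$ the various reactants involved and $z_{j+1}\geq\nu\p$. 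Everything else is a routine unwinding of definitions; since the excerpt explicitly says the proof is omitted, I would either present this argument compactly or simply cite it as folklore, mirroring the paper's choice to "omit the proofs" for these state-space correspondences.
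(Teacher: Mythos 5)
Your proof is correct; the paper explicitly omits the proof of this lemma, and your cyclic argument $\eqref{Prop0}\Rightarrow\eqref{Prop}\Rightarrow\eqref{Prop3}\Rightarrow\eqref{Prop0}$ — with the translation-invariance of reaction activity (active at $w$ implies active at $w+v$ for $v\in\Z_{\geq 0}^\cS$) doing the real work in the last implication — is the standard argument the authors presumably intended. The only blemish is a slip in your final paragraph: the shift applied to the path from $\nu\p$ to $\nu$ should be $z_{j+1}-\nu\p=z_j-\nu$ (nonnegative because the forward reaction was active at $z_j$), not $z_j-\nu\p$; you state the correct shift earlier, so this does not affect the argument.
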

  % \color{violet}
   %\begin{proof}
  % We will show that $\cG$ is essential if and only if (\ref{Prop}) first.
  %Obviously (\ref{Prop}) implies that $\cG$ is essential. Assume \ref{Prop} does not hold, then there is a state $x$ such that $x\to_{\cG}x\p$ but we do not have $x\p\to_{\cG}x$, and hence $x$ is not in an irreducible component.\hfill\break
  %Next we will show that (\ref{Prop}) holds if and only if (\ref{Prop2}) holds. So assume (\ref{Prop}), then obviously (\ref{Prop2}) holds (i.e., choose $x=\nu, x\p=\nu\p$). Assume (\ref{Prop2}), and let $x\to_{\cG}x\p$. Then by definition (cf., e.g. \eqref{accessible_def}) there is a path of states which differ by the application of a reaction, and hence by (\ref{Prop2}) we know we also have $x\p \to_{\cG}x$
   %\end{proof}
   %\color{black}
  
  \begin{lemma}\label{Lemma_ess}
  Consider $\cG=\cG_1 \cup \cG_2$ as in \color{black}Definition \color{black}\ref{def_union} and let $x\in \Ga$ be an element of an irreducible component $\Ga$ of $\cG$.
 \begin{enumerate}[label=(C\arabic{*})]
  \item \label{C1} If $\cG$ is a join of reaction-disjoint networks(cf. Def.  \ref{def_union}), then the following holds:\\
  A reaction $y\to y\p\in\cR_1$ is active on $x$ if and only if it is active on $p_1(x)$.
    %\item \label{C2} A reaction $y\to y\p\in\cR_1$ is active on $x$ if and only if it is active on any element of $p_1^{-1}(p_1(x))$.
  \item \label{C2} If both $\cG_1$ and $\cG_2$ are essential, then their union $\cG$ is essential.
 % \item \label{C3}
   \end{enumerate}
   \end{lemma}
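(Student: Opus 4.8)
\medskip

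The plan is to prove the two claims \ref{C1} and \ref{C2} separately, since they are essentially independent.

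For \ref{C1}, I would argue directly from the definition of ``active.'' A reaction $y\to y\p\in\cR_1$ is active on $x\in\Z_{\geq 0}^\cS$ precisely when $x\geq y$, i.e. $x_i\geq y_i$ for all $S_i\in\cS=\cS_1\cup\cS_2$. Since $\cG$ is a join of reaction-disjoint networks, Definition \ref{def_union} tells us (via the displayed chain of implications, reading contrapositively) that $\cR_1\cap\cR_2=\emptyset$ does not force $\cS_1\cap\cS_2=\emptyset$; however, the reactant $y$ of a reaction in $\cR_1$ is supported on $\cS_1$, i.e. $y_i=0$ for all $S_i\notin\cS_1$. Hence the condition $x\geq y$ only constrains the coordinates of $x$ indexed by $\cS_1$, and $x\geq y$ holds if and only if $p_1(x)\geq y$ (where on the right we view $y$ as an element of $\Z^{\cS_1}$, which is legitimate since $y$ has $\cS_1$-support). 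This is exactly the statement that $y\to y\p$ is active on $x$ iff it is active on $p_1(x)$. The only subtlety worth spelling out is that the species appearing in complexes of $\cG_1$ lie in $\cS_1$ by construction of $\cG_1$, so every reactant $y$ of a reaction in $\cR_1$ has zero entries outside $\cS_1$; this is where one uses that $\cG_1$ is itself a well-defined reaction network on species set $\cS_1$.

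For \ref{C2}, I would use the characterization of essential from Lemma \ref{ess_lemma}, in the form of condition \ref{Prop3}: $\cG$ is essential if and only if for every reaction $\nu\to\nu\p\in\cR$ we have $\nu\p\to_{\cG}\nu$. Take any $\nu\to\nu\p\in\cR=\cR_1\cup\cR_2$; without loss of generality $\nu\to\nu\p\in\cR_1$. Since $\cG_1$ is essential, Lemma \ref{ess_lemma}\ref{Prop3} applied to $\cG_1$ gives a directed path in the dynamics of $\cG_1$ from $\nu\p$ back to $\nu$, using only reactions of $\cR_1$. The key point is then that any such accessibility relation in $\cG_1$ lifts to an accessibility relation in $\cG$: if a reaction $y\to y\p\in\cR_1$ is active at some state reachable in $\cG_1$, one must check it is also active at the corresponding state reachable in $\cG$. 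Concretely, one shows $p_1(u)\to_{\cG_1}p_1(w)$ implies $u\to_{\cG}w$ whenever $u,w$ agree off $\cS_1$ — this is the direction needed here, and it follows by \ref{C1}-type reasoning together with the fact that firing a $\cR_1$-reaction changes only the $\cS_1$-coordinates (since $\nu-\nu\p$ is supported on $\cS_1$ for $\nu\to\nu\p\in\cR_1$). Chaining the reactions of the $\cG_1$-path from $\nu\p$ to $\nu$ thus produces a $\cG$-path from $\nu\p$ to $\nu$, so $\nu\p\to_{\cG}\nu$. As $\nu\to\nu\p$ was arbitrary, Lemma \ref{ess_lemma}\ref{Prop3} yields that $\cG$ is essential.

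\medskip

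The main obstacle I anticipate is the lifting step in \ref{C2}: one must argue carefully that a directed path of reactions witnessing $\nu\p\to_{\cG_1}\nu$ remains valid in $\cG$, i.e. that intermediate states do not lose the activity of the reactions along the way. This requires noting that reactions of $\cR_1$ act only on $\cS_1$-coordinates and invoking part \ref{C1} (active on $x$ iff active on $p_1(x)$) at each step, so that the $\cS_2\setminus(\cS_1\cap\cS_2)$-coordinates are simply carried along unchanged. Everything else is bookkeeping with supports of complexes and the definition of accessibility.
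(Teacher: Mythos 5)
Your proof is correct. Note that the paper explicitly omits the proofs of Lemma \ref{ess_lemma} and Lemma \ref{Lemma_ess} ("we next establish some simple correspondences \dots where we omit the proofs"), so there is no authorial argument to compare against; your write-up supplies what is evidently the intended reasoning. For \ref{C1}, the key observation is exactly the one you make: the reactant of a reaction in $\cR_1$ is a complex of $\cG_1$ and hence supported on $\cS_1$, so the activity condition $x\geq y$ only constrains the $\cS_1$-coordinates and is equivalent to $p_1(x)\geq y$. For \ref{C2}, reducing to condition \ref{Prop3} of Lemma \ref{ess_lemma} and lifting a $\cG_1$-path from $\nu\p$ to $\nu$ into $\cG$ is the natural route, and you correctly identify the only point requiring care: each reaction of $\cR_1$ along the path depends on and changes only the $\cS_1$-coordinates, so activity is preserved step by step (your \ref{C1}-type argument) and the remaining coordinates are carried along unchanged. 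One cosmetic remark: the aside in your \ref{C1} argument about reading the chain of implications in Definition \ref{def_union} contrapositively is immaterial — all that is used is that reactants of $\cR_1$-reactions have $\cS_1$-support — but it does no harm.
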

%   \color{violet}
   %\begin{proof} Property \ref{C1} follows by definition, we next prove \ref{C2}. 
   %By \color{blue}Lemma \color{black}\ref{ess_lemma} it is enough to show that from $x\to_{\cG_1\cup \cG_2}x\p$ it follows that $x\p\to_{\cG_1\cup \cG_2}x$. Since $x\to_{\cG_1\cup \cG_2}x\p$ implies that there are intermediate states $x_1,\cdots , x_k$ such that $x\to_{\cG_1}x_1\to_{\cG_2}x_2\to_{\cG_1}x_3\cdots \to_{\cG_2}x$, the same chain works in the reverse direction from (\ref{Prop}) \color{blue}Lemma \color{black}\ref{ess_lemma} as $\cG_1$ and $\cG_2$ are essential.
   %\end{proof}
   %\color{black}
      \begin{remark} If $\cG_1$ is almost essential and $\cG_2$ is essential, their union $\cG$ is not necessarily almost essential. As an example consider the following:
   $$\cG_1 =\{X\to Y, \quad 3Y\to3X\}, \quad \cG_2=\{\emptyset \rightleftharpoons W\}$$
   Since for $\cG$ the following part of state space $\{z\in \Z^3| z_W\geq 0,z_X=0,z_Y=2\}$ is not part of an irreducible component, $\cG$ is not almost essential. In particular \ref{C2} does not extend to almost essential. 
   \end{remark}
   \begin{remark}\label{rmk_ess} Even if $\cG$ is essential, there might be no reaction-disjoint (or non-reaction disjoint) decomposition such that $\cG=\cG_1\cup \cG_2$ with $\cG_1,\cG_2$ essential. As an example, consider, e.g., the following CRN
   $$\emptyset \rightleftharpoons S_1,\quad S_2\to \emptyset,\quad S_1\to S_1+S_2$$
   which can be seen as a simple model for gene-expression \cite{Thattai8614}. In this example the only essential subnetworks are $\emptyset \rightleftharpoons S_1$ and the CRN itself.\color{black}
   \end{remark}
   \begin{lemma}\label{weakly_ess} We have the following implication for a reaction network $\cG$:\\
   $\cG$ reversible $\implies$ $\cG$ weakly reversible $\implies$ \ref{Prop3}. of \color{black}Lemma \color{black}\ref{ess_lemma} holds for $\cG$.\\
   In particular, reversible and weakly reversible reaction networks are essential.
   \end{lemma}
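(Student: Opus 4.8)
The plan is to prove the two implications separately and then conclude with Lemma \ref{ess_lemma}. First I would show that reversible implies weakly reversible: if $\cG$ is reversible then for every $\nu\to\nu\p\in\cR$ we also have $\nu\p\to\nu\in\cR$, which is by itself a directed path in the reaction graph beginning at $\nu\p$ and ending at $\nu$; this is exactly the defining condition of weak reversibility. This step is immediate from the definitions in Section \ref{CRN}.

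Next I would show that weakly reversible implies condition \ref{Prop3} of Lemma \ref{ess_lemma}, namely that for every $\nu\to\nu\p\in\cR$ the reactant $\nu$ is accessible from $\nu\p$ in the CTMC of $\cG$, i.e. $\nu\p\to_{\cG}\nu$. Fix a reaction $\nu\to\nu\p\in\cR$. By weak reversibility there is a directed path $\nu\p=\mu_0\to\mu_1\to\cdots\to\mu_k=\nu$ in the reaction graph, with each $\mu_{j}\to\mu_{j+1}\in\cR$. The key observation is that for a reaction $y\to y\p$ the transition it induces from a state $x$ with $x\geq y$ sends $x$ to $x+y\p-y$, and in particular starting from the state $y$ itself (which satisfies $y\geq y$) the reaction is active and produces $y\p$. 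Applying this along the path: from the state $\mu_0=\nu\p$ the reaction $\mu_0\to\mu_1$ is active (since $\mu_0\geq\mu_0$) and moves the chain to $\mu_0+\mu_1-\mu_0=\mu_1$; from $\mu_1$ the reaction $\mu_1\to\mu_2$ is active and moves the chain to $\mu_2$; iterating, the chain reaches $\mu_k=\nu$. Hence $\nu$ is accessible from $\nu\p$, which is precisely \ref{Prop3}.

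The main obstacle — really the only subtlety — is making sure the "walk along the complexes" argument is valid, i.e. that at each step the current state is exactly the complex $\mu_j$ so that the next reaction $\mu_j\to\mu_{j+1}$ fires with the stoichiometric change $\mu_{j+1}-\mu_j$ landing exactly on $\mu_{j+1}$. This follows because $\mu_j\geq\mu_j$ holds trivially componentwise, so the reaction is active on the state $\mu_j$, and the resulting state $\mu_j+(\mu_{j+1}-\mu_j)=\mu_{j+1}$; one just needs to iterate this along the finite path and invoke transitivity of accessibility. Finally, combining the two implications with the equivalence \ref{Prop0}$\Leftrightarrow$\ref{Prop3} from Lemma \ref{ess_lemma} shows that reversible and weakly reversible reaction networks are essential, which gives the last assertion.
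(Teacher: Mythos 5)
Your proof is correct: the paper omits the proof of this lemma entirely (it is grouped with the "simple correspondences... where we omit the proofs"), and your argument — reversibility gives weak reversibility trivially, and then walking the CTMC along the complexes $\nu\p=\mu_0\to\mu_1\to\cdots\to\mu_k=\nu$ of the directed path, using that each reaction $\mu_j\to\mu_{j+1}$ is active on the state $\mu_j$ and sends it exactly to $\mu_{j+1}$ — is precisely the standard argument the author leaves implicit. Combining with the equivalence of \ref{Prop0} and \ref{Prop3} in Lemma \ref{ess_lemma} to get essentiality is also the intended conclusion, so nothing is missing.
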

   Also compare \color{black}Lemma \color{black}\ref{weakly_ess} to \cite{disc_Crac}, which contains a similar result (written with different notions).
   Furthermore we need the following Lemma which follows by the definition of irreducible component.
   \begin{lemma}\label{support_lem} Let $\cG=\cG_1 \cup \cG_2$ be as in \color{black}Definition \color{black}\ref{def_union} and consider an irreducible component $ \Ga$ of $\cG$ such that $p_1(\Ga)$ is a union of irreducible components of $\cG_1$ (i.e.  $p_1(\Ga)=\dot\bigcup_{i\in \mathcal{I}}\Ga_i^1)$. Then, if for $x\in \Ga,x\p\in\Z_{\geq 0}^\cS$ we have $p_1(x\p)\to_{\cG_1} p_1(x)$ but $p_1(x)\not\to_{\cG_1} p_1(x\p)$, then $x\p\not\in \Ga$.
   \end{lemma}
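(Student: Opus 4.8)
The plan is to argue directly from the defining property of an irreducible component of $\cG$, using the fact that accessibility in a sub-network implies accessibility in the union, together with the hypothesis that $p_1(\Ga)$ is a disjoint union of irreducible components of $\cG_1$. Suppose for contradiction that $x\p\in\Ga$. Since $\Ga$ is an irreducible component of $\cG$ and $x\in\Ga$, every state accessible from $x$ in $\cG$ lies in $\Ga$, and conversely every state in $\Ga$ (in particular $x\p$) is accessible from $x$; symmetrically $x$ is accessible from $x\p$ in $\cG$.

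The key step is to transfer these $\cG$-accessibility statements down to $\cG_1$ via $p_1$. First I would observe that whenever $z\to_{\cG}w$, applying the projection $p_1$ to the sequence of reactions realizing this (each reaction in $\cR_1\cup\cR_2$ projects either to a reaction of $\cG_1$ or to the zero step, since $p_1$ kills the species not in $\cS_1$ and $\cR_2$ only touches $\cS_2$-coordinates outside $\cS_1\cap\cS_2$ — more carefully, reactions of $\cR_2$ that do involve shared species still project to zero on $p_1$ only if they do not change any $\cS_1$-coordinate; in the reaction-disjoint case $\cS_1\cap\cS_2\subseteq\{\text{nothing relevant}\}$, but in general one must be slightly careful). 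The cleaner route, which I would actually take, is: since $x\in\Ga$ and $x\p\in\Ga$, both $p_1(x)$ and $p_1(x\p)$ lie in $p_1(\Ga)=\dot\bigcup_{i\in\mathcal I}\Ga_i^1$. By hypothesis $p_1(x\p)\to_{\cG_1}p_1(x)$, so $p_1(x)$ is accessible from $p_1(x\p)$ in $\cG_1$; since each $\Ga_i^1$ is an irreducible component of $\cG_1$, accessibility forces $p_1(x)$ and $p_1(x\p)$ to lie in the same component $\Ga_{i_0}^1$, and then, because $\Ga_{i_0}^1$ is irreducible, also $p_1(x)\to_{\cG_1}p_1(x\p)$. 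This contradicts the hypothesis $p_1(x)\not\to_{\cG_1}p_1(x\p)$, so $x\p\notin\Ga$.

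The main obstacle — and the only point requiring genuine care — is the claim ``$x\p\in\Ga\Rightarrow p_1(x\p)\in p_1(\Ga)$ and $p_1(x\p)$ lies in one of the $\Ga_i^1$.'' The first half is immediate from $p_1(\Ga)$ being the image. For the second half one uses that $p_1(\Ga)$ is \emph{by hypothesis} exactly the disjoint union of the irreducible components $\Ga_i^1$, so every element of $p_1(\Ga)$ is in some $\Ga_i^1$; no further work is needed there. Thus the whole argument reduces to the elementary observation that two points of $p_1(\Ga)$ that are connected by a one-way accessibility relation in $\cG_1$ must actually be in the same irreducible component of $\cG_1$ (hence mutually accessible), which is immediate from the definition of irreducible component. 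I would write this up in a few lines, emphasizing only that we never need to lift $\cG$-trajectories to $\cG_1$-trajectories: the hypothesis on the shape of $p_1(\Ga)$ does all the work.
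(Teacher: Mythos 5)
Your argument is correct and is exactly the route the paper intends: the paper gives no proof, stating only that the lemma ``follows by the definition of irreducible component,'' and your clean version (if $x\p\in\Ga$ then $p_1(x\p)$ lies in some $\Ga_{i_0}^1$, accessibility of $p_1(x)$ from $p_1(x\p)$ forces $p_1(x)\in\Ga_{i_0}^1$, and the definition of irreducible component then yields $p_1(x)\to_{\cG_1}p_1(x\p)$, a contradiction) is precisely that argument spelled out. The preliminary musings about projecting $\cG$-trajectories are unnecessary, as you yourself note, and can be dropped from the write-up.
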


  \subsection{Stationary distributions of joins of reaction networks}\label{statCRN}
  Here we will generalise the setting of Remark \ref{simplest_case} in a direction where we can still deduce the form of a stationary distribution of the joined network $\cG=\cG_1\cup \cG_2$ from the combinations of the stationary distributions of the separate reaction networks $\cG_1,\cG_2$. Notice that there are no conditions on the type of kinetics.

  \begin{theorem}\label{patch_CRN1}
  Let $\cG=\cG_1 \cup \cG_2$ be a reaction network obtained from a union of reaction-disjoint networks as in \color{black}Definition \color{black}\ref{def_union} with $\cS_1\cap \cS_2\neq \emptyset$. Let $\Ga$ be an irreducible component of $\cG$. Consider the following assumptions:
  \begin{enumerate}[label=(B\arabic{*})]
  \item\label{B1} Assume 
  $p_1(\Ga)$ %$p_1(p_2^{-1}(p_2(x))\cap\Ga)$
   is a union of irreducible components of the stochastic dynamics of $\cG_1$ (i.e.  $p_1(\Ga)=\dot\bigcup_{i\in \mathcal{I}}\Ga_i^1)$ with stationary distributions on the irreducible components of the following form
  $$\pi_1(p_1(x))=\frac{1}{Z}\prod_{S_i\in\cS_1\setminus\cS_1\cap\cS_2}f_i(x_i)\prod_{S_i\in\cS_1\cap\cS_2}f^1_i(x_i).
  $$%{\cS_1\setminus (\cS_1\cap \cS_1)
    \item\label{B2} Assume the same (i.e. as in \ref{B1}) for $\cG_2$, where we denote the stationary distribution on an irreducible component of $\cG_2$ by
    $$\pi_2(p_2(x))=\frac{1}{Z}\prod_{S_i\in\cS_2\setminus\cS_1\cap\cS_2}f_i(x_i)\prod_{S_i\in\cS_1\cap\cS_2}f^2_i(x_i).
  $$
  \item\label{B3} Assume there is an $\a>0$ such that for all $x\in \Ga$ and all $S_i\in\cS_1\cap\cS_2$ we have 
  $$\a f_{i}^1(x_i)=f^2_{i}(x_i).$$
  \end{enumerate}
  If \ref{B1}, \ref{B2} and \ref{B3} are satisfied, then $\cG=\cG_1 \cup \cG_2$ has a product-form stationary distribution of the form
\begin{equation}\label{eq_sol1}
\pi(x)=\frac{1}{Z}\prod_{S_i\in\cS}f_i(x_i),
\end{equation}
where for $S_i\in\cS_1\cap\cS_2$ we set $f_i:=f^2_{i}$ on the irreducible component $\Ga$ if \eqref{eq_sol1} is summable. Furthermore $\cG$ is then generalized balanced for $\pi$ on $\Ga$ with $\{(\cR_i,\cR_i)_{i\in \{1,2\}}\}$.
  \end{theorem}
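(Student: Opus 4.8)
The plan is to show directly that the candidate $\pi$ of \eqref{eq_sol1} is generalized balanced (Definition \ref{tauto_def}) for the reaction decomposition $L_i=R_i=\cR_i$, $i\in\{1,2\}$ — a genuine decomposition of $\cR$ into disjoint sets because the union is reaction-disjoint — and then to invoke Proposition \ref{tauto_lemma} for stationarity. By the symmetry between $\cG_1$ and $\cG_2$ it suffices to prove, for every $x\in\Ga$, the $\cR_1$-balance identity
\[
\sum_{\nu\to\nu\p\in\cR_1}\pi(x+\nu-\nu\p)\,\la_{\nu\to\nu\p}(x+\nu-\nu\p)=\pi(x)\sum_{\nu\to\nu\p\in\cR_1}\la_{\nu\to\nu\p}(x),
\]
together with its $\cR_2$-analogue.

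First I would record the elementary consequences of reaction-disjointness: for $\nu\to\nu\p\in\cR_1$ the reactant $\nu$ and the reaction vector $\nu\p-\nu$ are supported on $\cS_1$, so (i) as a reaction of the union, $\la_{\nu\to\nu\p}$ is the $\cG_1$-intensity composed with $p_1$ (this reflects that a $\cG_1$-reaction carries the same intensity in the union as in $\cG_1$, and uses nothing about the kinetics); (ii) the factor $g(x):=\prod_{S_i\in\cS_2\setminus\cS_1}f_i(x_i)$ is unchanged along every $\cR_1$-reaction, $g(x+\nu-\nu\p)=g(x)$; and (iii) by \ref{C1} of Lemma \ref{Lemma_ess}, $\nu\to\nu\p$ is active at a state precisely when it is active at its $p_1$-projection. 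I would also note that $\pi>0$ on $\Ga$ (so $g(x)>0$ there), since the $f_i$, $f_i^1$, $f_i^2$ are positive on the coordinate ranges occurring in $p_1(\Ga)$, $p_2(\Ga)$, using \ref{B3} to transfer positivity of $f_i^1$ to $f_i^2$.

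The crux — and the step I expect to cost the most work — is to match the supports of the two sides above with those of the $\cG_1$-master equation at $p_1(x)$. A term indexed by $\nu\to\nu\p\in\cR_1$ survives on the left exactly when the reaction is active at $x+\nu-\nu\p$ and $x+\nu-\nu\p\in\Ga$, while the corresponding term of the $\cG_1$-master equation survives exactly when the reaction is active at $p_1(x+\nu-\nu\p)$ and $p_1(x+\nu-\nu\p)\in p_1(\Ga)$. Activity matches by (iii). For the state condition, suppose $\nu\to\nu\p\in\cR_1$ is active at $x+\nu-\nu\p$ with $p_1(x+\nu-\nu\p)\in p_1(\Ga)$: firing the reaction at $p_1(x+\nu-\nu\p)$ lands at $p_1(x)$, so $p_1(x)$ and $p_1(x+\nu-\nu\p)$ lie in the same irreducible component $\Ga_j^1$ of $\cG_1$ (here \ref{B1} is used, $p_1(\Ga)=\dot\bigcup_i\Ga_i^1$; and for $x+\nu-\nu\p\in\Ga$ the reverse inclusion is immediate, Lemma \ref{support_lem} giving one direction); then a $\cG_1$-path inside $\Ga_j^1$ from $p_1(x)$ to $p_1(x+\nu-\nu\p)$ lifts, step by step via (iii), to a $\cG$-path starting at $x$ that keeps the $\cS_2\setminus\cS_1$-coordinates fixed and hence ends at $x+\nu-\nu\p$; together with $x+\nu-\nu\p\to_{\cG}x$ this forces $x+\nu-\nu\p\in\Ga$ by irreducibility of $\Ga$. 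Consequently every state appearing with a nonzero coefficient in the identity projects into the single component $\Ga_j^1$ containing $p_1(x)$.

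It remains to assemble the computation. On $\Ga$ write $\pi(x)=\frac{1}{Z}\,g(x)\,h_1(p_1(x))$ with $h_1(y):=\prod_{S_i\in\cS_1\setminus\cS_2}f_i(y_i)\prod_{S_i\in\cS_1\cap\cS_2}f^2_i(y_i)$; by \ref{B3} and \ref{B1}, on $\Ga_j^1$ one has $h_1=c_j\,\pi_1$ for a constant $c_j>0$ (namely $\a^{|\cS_1\cap\cS_2|}$ times the relevant normalizing constant of $\pi_1$), and replacing $f_i^2$ by $f_i^1$ for the shared species only rescales $h_1$ by an overall constant absorbed into $Z$. Substituting this factorization and (i) into the $\cR_1$-balance identity, the scalars $\frac{1}{Z}$, $g(x)$, $c_j$ are common to all surviving terms on both sides — by the previous paragraph — and cancel, so the identity reduces to the master equation \eqref{master_eq1} for $\cG_1$ at $p_1(x)\in\Ga_j^1$, which holds since $\pi_1$ is stationary on $\Ga_j^1$ by \ref{B1}. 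Running the argument with $\cG_1,\cG_2$ and $p_1,p_2$ interchanged gives the $\cR_2$-identity, so $\pi$ is generalized balanced for $\{(\cR_i,\cR_i)_{i\in\{1,2\}}\}$ on $\Ga$; Proposition \ref{tauto_lemma} then yields that $\pi$ is the stationary distribution on $\Ga$, summability of \eqref{eq_sol1} being exactly what makes $Z$ finite so that $\pi$ is a probability distribution.
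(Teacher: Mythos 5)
Your proposal is correct and follows essentially the same route as the paper's proof: establish generalized balance for $\{(\cR_i,\cR_i)_{i\in\{1,2\}}\}$, reduce to the $\cR_1$-identity by symmetry, factor out the $\cS_2\setminus\cS_1$ coordinates (constant along $\cR_1$-reactions and nonzero), and identify what remains with the master equation of $\cG_1$ at $p_1(x)$. The only difference is that you spell out the support-matching and path-lifting argument in more detail than the paper, which disposes of it via the singleton-component case and Lemma \ref{support_lem}; this is a refinement of the same argument, not a different one.
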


  \begin{proof}
  It suffices by \color{black}Definition \color{black}\ref{tauto_def} and Proposition \ref{tauto_lemma} to show that
for any $x\in \Ga$ the master equation 

\begin{equation}\label{eq_bal}
  \sum_{\nu\to\nu\p\in \cR_i} \pi(x+\nu-\nu\p)\la_{\nu \to \nu\p}(x+\nu -\nu\p)=\pi(x)\sum_{\nu\to\nu\p\in \cR_i}\la_{\nu \to \nu\p}(x)
\end{equation}
is satisfied with solution \eqref{eq_sol1} for $i\in\{1,2\}$, which corresponds to generalized balanced with $\{(\cR_i,\cR_i)_{i\in \{1,2\}}\}$. 
Note that it is enough to prove it for $\cR_1$. Then we are done by the symmetry of the assumption, and \eqref{eq_sol1} is a stationary distribution, given it is summable.

We next prove that the master equation \eqref{eq_bal} holds true for reaction set $\cR_i=\cR_1$ with solution \eqref{eq_sol1}.
 For $x\in\Ga$ by assumption $p_1(x)\in \Z_{\geq 0}^{\cS_1}$ is in an irreducible component of $\cG_1$. % we go through two different cases. If $p_1(x)\in \Z_{\geq 0}^{\cS_1}$ is in an irreducible component of $\cG_1$ which is
%We distinguish the two cases where $\{y\rest_{\cS_1}|y\in \Ga, \text{where } y\rest_{\cS_2}=x\rest_{\cS_2}\text{ with }x\rest_{\cS_2} \text{ is fix}\}$ is 
If this irreducible component is a singleton set, then the equation is trivially true. There are no active reactions of $\cR_1$ on $x$ and the right side of \eqref{eq_bal} is zero. The left side of  \eqref{eq_bal} is zero as well since these states are transient, i.e. the stationary distribution has no support (cf. \color{black}Lemma \color{black}\ref{support_lem}).
Hence assume it is a non-trivial irreducible component of $\cG_1$,  then inserting the proposed Ansatz \eqref{eq_bal} (modulo normalization) gives
\begin{equation}\label{ME_1}
\begin{aligned} \sum_{\nu\to\nu\p\in \cR_1}\prod_{S_i\in\cS}f_i(x_i+\nu_i-\nu\p_i)\la_{\nu \to \nu\p}(x+\nu -\nu\p)=\\
\prod_{S_i\in\cS}f_i(x_i)\sum_{\nu\to\nu\p\in \cR_1}\la_{\nu \to \nu\p}(x).
\end{aligned}
\end{equation}
Since the reactions in $\cR_1$ do not change the coordinates of $\cS_2\setminus \cS_1$, we have for all $S_i\in\cS_2\setminus \cS_1$ and all $\nu\to\nu\p\in\cR_1$ that $f_i(x_i)=f_i(x_i+\nu_i-\nu\p_i)$, i.e. we can factor the equation as
\begin{equation}\label{ME_2}
\begin{aligned} \prod_{S_i\in\cS_2\setminus \cS_1}f_i(x_i)\sum_{\nu\to\nu\p\in \cR_1}\prod_{S_i\in\cS_1}f_i(x_i+\nu_i-\nu\p_i)\la_{\nu \to \nu\p}(x+\nu -\nu\p)=\\
\prod_{S_i\in\cS}f_i(x_i)\sum_{\nu\to\nu\p\in \cR_1}\la_{\nu \to \nu\p}(x).
\end{aligned}
\end{equation}
By assumption $\prod_{S_i\in\cS_2\setminus \cS_1}f_i(x_i)$ is nonzero (i.e. by contradiction with the assumption on the stationary distribution), %(as otherwise \eqref{eq_bal} is trivially true) we get
so \eqref{ME_1} is satisfied if the following holds:
$$\sum_{\nu\to\nu\p\in \cR_1}\prod_{S_i\in\cS_1}f_i(x_i+\nu_i-\nu\p_i)\la_{\nu \to \nu\p}(x+\nu -\nu\p)=$$
$$\prod_{S_i\in\cS_1}f_i(x_i)\sum_{\nu\to\nu\p\in \cR_1}\la_{\nu \to \nu\p}(x).$$
 %By Lemma \label{act_Lem} the active reactions of the RHS and the ones of $x\rest_{\cS_1}$ of the irreducible component of $\cR_1$ are the same. By the assumptions and Lemma \label{act_Lem} also the incoming reactions of the RHS are the same.
Now we identify the left and the right hand sides of the above equation with the corresponding sides of the master equation from $\cG_1$ with the projection $p_1(x)$ on the irreducible component. Since the transition rates of the reactions of $\cR_1$ only depend on the coordinates of $\cS_1$, they are the same as the transition rates of the master equation from $\cG_1$ under $p_1(x)$ and we get an equality by assumption \ref{B1}.
\end{proof}

\begin{remark}\label{obs_}[Assumptions I] Observe the following.
\begin{itemize}
%\item Theorem \ref{patch_CRN1} contains no assumptions on the underlying kinetics. 
\item Theorem \ref{patch_CRN1} assumes that the stationary distributions of $\cG_1,\cG_2$ are of product-form. While this is a restriction, current results on form of stationary distributions are mostly in product-form (cf. \cite{anderson2,hoesslysta}). { \color{black}Nonetheless, some examples with stationary distribution of non-product form are available \cite[$\S$ 4.1]{irrev} or \cite{bibb_stat}, but calculating it or even writing it down in small examples is demanding.} 
\item By definition, $p_{12}(\Ga)=p_{21}(\Ga)$, and condition \ref{B3} requires the functions $f^1_{i},f^2_{i}$ with $S_i\in\cS_1\cap\cS_2$ to be proportional on $p_{S_i}(\Ga)\subseteq \Z_{\geq 0}$.
\item Notice that Theorem \ref{patch_CRN1} assumes that the union comes from reaction-disjoint networks as in \color{black}Definition \color{black}\ref{def_union}. \color{black} By the proof of Theorem \ref{patch_CRN1} it would also hold for unions of reaction networks where we glue over reactions with \color{black}Definition \ref{def_union} (and similarly for its consequences, i.e., Theorems \ref{patch_CRN2}, \ref{patch_CRN3}, and Corollary \ref{mult_appl}). However, results on gluing over reactions are only a side product of the intended scope and does not seem very practical at the moment. We refer to Remark \ref{complexity_glue_react} for issues on applicability with respect to decomposing CRNs under gluing over reactions.\color{black}
\end{itemize}
\end{remark}
{\color{black}
\begin{remark}\label{obs_2}[Assumptions II]
 Note that assumption \ref{B3} can be stated more general and Theorem \ref{patch_CRN1} still holds with the same proof, i.e. in the following way:\\
 Assume there are constants $\a_i$ for all $S_i\in\cS_1\cap\cS_2$ with $\a_i>0$ such that for all $x\in \Ga$ we have 
  $$\a_i f_{i}^1(x_i)=f^2_{i}(x_i).$$
 If this more general condition together with \ref{B1}, \ref{B2} still holds, the conclusion of Theorem \ref{patch_CRN1} is maintained with $f_i:=f_i^2$ for $S_i\in\cS_1\cap \cS_2$. Furthermore it is easy to see that this does not influence the summability of \eqref{eq_sol}. The same extension then follows for Theorem \ref{patch_CRN3}.
\end{remark}
}

\color{black}
Furthermore, by the same proof as for Theorem \ref{patch_CRN1} we can conclude the following for a slightly generalised setting (where $f_{11}:\Z^{\cS_1\setminus\cS_2}_{\geq 0}\to \R_{>0}$, etc.).
  \begin{theorem}\label{patch_CRN2}
  Let $\cG=\cG_1 \cup \cG_2$ be a reaction network obtained from a union of reaction-disjoint networks as in Definition \ref{def_union} with $\cS_1\cap \cS_2\neq \emptyset$. Let $\Ga$ be an irreducible component of $\cG$. Consider the following assumptions:
  \begin{enumerate}[label=(B$\p$\arabic{*})]
  \item\label{B1p} Assume 
  $p_1(\Ga)$ %$p_1(p_2^{-1}(p_2(x))\cap\Ga)$
   is a union of irreducible components of the stochastic dynamics of $\cG_1$ (i.e.  $p_1(\Ga)=\dot\bigcup_{i\in \mathcal{I}}\Ga_i^1)$ with stationary distributions on the irreducible components of the following form
  $$\pi_1(p_1(x))=\frac{1}{Z}f_{11}(p_{11}(x))f_{12}(p_{12}(x)).
  $$%{\cS_1\setminus (\cS_1\cap \cS_1)
    \item\label{B2p} Assume the same (i.e. as in \ref{B1p}) for $\cG_2$, where we denote the stationary distribution on an irreducible component of $\cG_2$ by
    $$\pi_2(p_2(x))=\frac{1}{Z}f_{21}(p_{21}(x))f_{22}(p_{22}(x)).
  $$
  \item\label{B3p} Assume there is an $\a>0$ such that for all $x\in \Ga$ we have 
  $$\a f_{12}(p_{12}(x))=f_{21}(p_{21}(x)).$$
  \end{enumerate}
  If \ref{B1p}, \ref{B2p} and \ref{B3p} are satisfied, then $\cG=\cG_1 \cup \cG_2$ has a stationary distribution of the form
\begin{equation}\label{eq_sol}
\pi(x)=\frac{1}{Z}f_{11}(p_{11}(x))f_{21}(p_{21}(x))f_{22}(p_{22}(x))
\end{equation}
  on the irreducible component $\Ga$ if \eqref{eq_sol} is summable. %Then $\cG$ is generalized balanced on $\Ga$ with $\{(\cR_i,\cR_i)_{i\in \{1,2\}}\}$ .
  \end{theorem}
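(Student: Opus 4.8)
The plan is to mirror the proof of Theorem \ref{patch_CRN1} almost verbatim, since the only change is that the product-form structure $\prod_{S_i} f_i(x_i)$ has been replaced by a coarser factorization into $f_{11}, f_{12}$ (resp. $f_{21}, f_{22}$) indexed by the three blocks of species $\cS_1\setminus\cS_2$, $\cS_1\cap\cS_2$, $\cS_2\setminus\cS_1$. By Definition \ref{tauto_def} and Proposition \ref{tauto_lemma}, it suffices to verify that the proposed $\pi$ in \eqref{eq_sol} satisfies the partial balance equation \eqref{eq_bal} for $\cR_i = \cR_1$ and for $\cR_i = \cR_2$; by the symmetry of the hypotheses (after using \ref{B3p} to rewrite $f_{21} = \a f_{12}$, so that $\pi$ can equally be written with the factor $f_{12}$), it is enough to treat $\cR_1$.

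First I would fix $x\in\Ga$ and reduce to the nontrivial case: if $p_1(x)$ lies in a singleton irreducible component of $\cG_1$, then no reaction of $\cR_1$ is active on $x$ (by hypothesis \ref{B1p} the projection controls activity of $\cR_1$-reactions, as in Lemma \ref{Lemma_ess}\ref{C1}), so the right-hand side of \eqref{eq_bal} vanishes, and the left-hand side vanishes too because the relevant states $x+\nu-\nu'$ are not in $\Ga$ (Lemma \ref{support_lem}), hence carry no $\pi$-mass. So assume $p_1(x)$ lies in a nontrivial irreducible component of $\cG_1$. Next I would substitute the Ansatz \eqref{eq_sol} into \eqref{eq_bal} for $\cR_1$. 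Since reactions in $\cR_1$ change no coordinate outside $\cS_1$, the factor $f_{22}(p_{22}(x))$ is unchanged under $x\mapsto x+\nu-\nu'$ for every $\nu\to\nu'\in\cR_1$, so it factors out of both sides; as $f_{22}$ takes values in $\R_{>0}$ it is nonzero and can be cancelled. What remains is exactly the equation
\begin{equation*}
\sum_{\nu\to\nu'\in\cR_1} f_{11}(p_{11}(x+\nu-\nu'))\, f_{21}(p_{21}(x+\nu-\nu'))\,\la_{\nu\to\nu'}(x+\nu-\nu') = f_{11}(p_{11}(x))\, f_{21}(p_{21}(x)) \sum_{\nu\to\nu'\in\cR_1}\la_{\nu\to\nu'}(x).
\end{equation*}
Using \ref{B3p} to replace $f_{21}$ by $\a^{-1} f_{12}$ throughout and cancelling the common constant $\a^{-1}$, and noting that the intensities $\la_{\nu\to\nu'}$ for $\nu\to\nu'\in\cR_1$ depend only on the $\cS_1$-coordinates (so they coincide with the intensities appearing in $\cG_1$'s master equation evaluated at $p_1(x)$), this is precisely the master equation \eqref{master_eq1} for the stochastic dynamics of $\cG_1$ on the irreducible component of $p_1(x)$ with stationary distribution $\pi_1$ of the form in \ref{B1p} — which holds by assumption \ref{B1p}.

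By the symmetric argument for $\cR_2$ (writing $\pi$ via the factor $f_{12}$ in place of $f_{21}$, which is legitimate by \ref{B3p}), the partial balance equation for $\cR_2$ holds as well. Hence, provided \eqref{eq_sol} is summable over $\Ga$ and thus defines a probability distribution after normalization, $\pi$ is generalized balanced for the decomposition $\{(\cR_i,\cR_i)_{i\in\{1,2\}}\}$, and Proposition \ref{tauto_lemma} gives that it is a stationary distribution of $\cG$ on $\Ga$. The only genuinely delicate point, as in Theorem \ref{patch_CRN1}, is the bookkeeping around states $x$ whose $\cG_1$- or $\cG_2$-projection sits in a singleton component: one must confirm that such $x$ are not in $\Ga$ (or are handled by the trivial vanishing of both sides), which is exactly the content of Lemma \ref{support_lem} and the hypothesis that $p_1(\Ga)$, $p_2(\Ga)$ are unions of irreducible components; everything else is the routine cancellation of the block-common factor and identification with the lower-level master equations.
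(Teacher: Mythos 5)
Your proposal is correct and follows essentially the same route as the paper, which proves Theorem \ref{patch_CRN2} simply by observing that the proof of Theorem \ref{patch_CRN1} carries over verbatim to the block-factorized setting: the same reduction via Proposition \ref{tauto_lemma}, the same singleton/non-singleton case split using Lemma \ref{support_lem}, and the same cancellation of the unaffected factor followed by identification with the master equation of $\cG_1$. Your explicit handling of the conversion between $f_{21}$ and $\a^{-1}f_{12}$ via \ref{B3p} is exactly the bookkeeping the paper leaves implicit.
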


\color{black}

\subsection{Properties of stochastic reaction networks under joins II}

We want to find sufficient conditions such that the projection
$p_1(\Ga)$ %$p_1(p_2^{-1}(p_2(x))\cap\Ga)$
   is a union of irreducible components of the stochastic dynamics of $\cG_1$, which is a part of the assumption \ref{B1} of Theorem \ref{patch_CRN1}.
   \begin{lemma}\label{triv_lemma} Let $\Ga$ be an irreducible component of $\cG=\cG_1 \cup \cG_2$. If 
$\cG_1$ is essential,
   then $p_1(\Ga)$ is a union of irreducible components of the stochastic dynamics of $\cG_1$. Note that this holds in particular if $\cG_1$ is weakly reversible (cf. \color{black}Lemma \color{black}\ref{weakly_ess}).
   \end{lemma}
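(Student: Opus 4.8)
The plan is to reduce the statement to a path‑lifting argument. Since $\cG_1$ is essential, by definition its full state space decomposes as a disjoint union of irreducible components, $\Z_{\geq 0}^{\cS_1}=\dot\bigcup_{j}\Ga_j^1$. In particular every point of $p_1(\Ga)$ lies in some $\Ga_j^1$, so $p_1(\Ga)=\dot\bigcup_j\big(p_1(\Ga)\cap\Ga_j^1\big)$, and it suffices to show that whenever $p_1(\Ga)\cap\Ga_j^1\neq\emptyset$ we in fact have $\Ga_j^1\subseteq p_1(\Ga)$. Taking the union over the $j$ with nonempty intersection then exhibits $p_1(\Ga)$ as a union of irreducible components of $\cG_1$, which is what is claimed.

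To prove that inclusion, I would fix $x\in\Ga$ with $p_1(x)\in\Ga_j^1$ and an arbitrary target $y^1\in\Ga_j^1$. Since $\Ga_j^1$ is an irreducible component of $\cG_1$ containing $p_1(x)$, there is a $\cG_1$‑accessibility path $p_1(x)=z_0,z_1,\dots,z_m=y^1$ via reactions $r_1,\dots,r_m\in\cR_1$, with every $z_k\in\Ga_j^1$. The key step is to lift this path to $\cG$: starting from $w_0:=x$ and firing the same reactions, set $w_k:=w_{k-1}+\nu_k'-\nu_k$ where $r_k=\nu_k\to\nu_k'$. Because the reactant $\nu_k$ of a reaction in $\cR_1$ is supported on $\cS_1$, each displacement changes only the $\cS_1$‑coordinates, so $p_1(w_k)=z_k$ and $p_2(w_k)=p_2(x)$ for all $k$; and since activity of an $\cR_1$‑reaction at a state depends only on its $\cS_1$‑coordinates (this is essentially \ref{C1} of Lemma~\ref{Lemma_ess}), the reaction $r_k$, active on $z_{k-1}=p_1(w_{k-1})$ by construction, is active on $w_{k-1}$. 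Hence $x=w_0\to_{\cG}w_m=:y$ with $p_1(y)=y^1$.

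To conclude: $\Ga$ is an irreducible component of $\cG$ containing $x$, and $x\to_{\cG}y$, so $y\in\Ga$, whence $y^1=p_1(y)\in p_1(\Ga)$. As $y^1\in\Ga_j^1$ was arbitrary, $\Ga_j^1\subseteq p_1(\Ga)$, finishing the argument; the final sentence of the lemma is then immediate from Lemma~\ref{weakly_ess}. Essentiality of $\cG_1$ is used exactly once — to guarantee that the representative $p_1(x)$ lies in some irreducible component $\Ga_j^1$ at all (a transient $p_1(x)$ would belong to none). I expect the only mildly delicate point to be the path‑lifting step, i.e. checking cleanly that an $\cR_1$‑reaction fires on $w_{k-1}$ precisely when it fires on $z_{k-1}$; everything else is bookkeeping. (The same argument goes through verbatim in the gluing‑over‑reactions variant of Definition~\ref{def_union}, since the reactant of a reaction in $\cR_1$ is still supported on $\cS_1$.)
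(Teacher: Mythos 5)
Your proof is correct, but it takes a genuinely different (and more thorough) route than the paper's. The paper fixes $x\in\Ga$ and applies the characterization of essentiality in Lemma \ref{ess_lemma} directly to $p_1(x)$: either no reaction of $\cG_1$ is active there and $\{p_1(x)\}$ is itself an irreducible component, or accessibility from $p_1(x)$ is symmetric and its communicating class is closed. That argument establishes the inclusion $p_1(\Ga)\subseteq\dot\bigcup_j\Ga_j^1$, i.e.\ that every projected point lies in some irreducible component of $\cG_1$. You get that inclusion for free from the definition of essential (the whole of $\Z_{\geq 0}^{\cS_1}$ decomposes into irreducible components) and instead spend your effort on the reverse inclusion $\Ga_j^1\subseteq p_1(\Ga)$ whenever $\Ga_j^1$ meets $p_1(\Ga)$, via path lifting: an $\cR_1$-path from $p_1(x)$ to an arbitrary $y^1\in\Ga_j^1$ lifts to a $\cG$-path from $x$ because reactants of $\cR_1$-reactions are supported on $\cS_1$ (the content of \ref{C1}), and the lifted endpoint lies in $\Ga$ since $\Ga$ is an irreducible component. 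This reverse inclusion is exactly what the stated equality $p_1(\Ga)=\dot\bigcup_{i\in \mathcal{I}}\Ga_i^1$ requires, and the paper's written proof leaves it implicit; your argument supplies it explicitly, at the cost of being longer. Your closing remarks — that essentiality of $\cG_1$ enters only to place $p_1(x)$ in some component, and that the lifting is insensitive to whether the union is reaction-disjoint or glued over reactions — are both accurate.
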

   \begin{proof} Let $x\in \Ga$, and let $p_1(x)$ be the corresponding projected element. We have to show it is part of an irreducible component of $\cG_1$. We distinguish the following two cases:
   \begin{itemize}
   \item If there are no active reactions on $p_1(x)$, then by \color{black}Lemma \color{black}\ref{ess_lemma} $p_1(x)$ is not accessible from any other $z\in \Z_{\geq 0}^{\cS_1}$, hence $p_1(x)$ is an irreducible component.
   \item Assume there are active reactions on $p_1(x)$. Then any other $z\in \Z_{\geq 0}^{\cS_1}$ is accessible from $p_1(x)$ if and only if $p_1(x)$ is accessible from this $z$ by \color{black}Lemma \color{black}\ref{ess_lemma}. Therefore the communicating class of $p_1(x)$ is closed.
   \end{itemize}
   \end{proof}
%      \begin{remark} Note that the condition of \color{black}Lemma \color{black}\ref{triv_lemma} is independent of $\cG_2$. Furthermore it is easy to see that a local version of the above statement (using i.e. \color{black}Lemma \color{black}\ref{ess_lemma}) holds with the same proof. I.e., if 
%   $\cG_1$ satisfies the following:\begin{itemize}
 %  \item
   %For all $\nu\to \nu\p\in \cR_1$ which are active reaction on $p_1(\Ga)$, we have $\nu\p\to_{\cG_1}\nu$ (i.e. $\nu$ is accessible from $\nu\p$ in $\cG_1$).
   %\end{itemize}
   %then $p_1(\Ga)$ is a union of irreducible components of the stochastic dynamics of $\cG_1$.
   %As an example, consider the following CRN on $\Ga_9=\{x\in \Z_{\geq 0}^3|\sum_{i=1}^3x_i=9\}$
   %$$\cG_1: S_1\rightleftharpoons S_2,\quad 10S_1\to \emptyset, \quad\cG_2: S_2\rightleftharpoons S_3$$
   %\end{remark}

   Next we further investigate the conditions of the results of $\S$ \ref{statCRN} by focussing in particular on essential reaction networks $\cG$.
   
   \begin{proposition}\label{prop_ess}
   Let $\cG=\cG_1\cup \cG_2$ be an essential reaction network. Then the following conditions are equivalent
   \begin{enumerate}[label=(D\arabic{*})]
   \item\label{ess_1} For every irreducible component $\Ga$ of $\cG$, the projection $p_1(\Ga)$ is a union of irreducible components of $\cG_1$ (i.e.  $p_1(\Ga)=\dot\bigcup_{i\in \mathcal{I}}\Ga_i^1)$.
   \item \label{ess_2} $\cG_1$ is essential.
   \end{enumerate}
   \end{proposition}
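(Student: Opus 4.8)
The plan is to prove the two implications separately. The direction \ref{ess_2} $\Rightarrow$ \ref{ess_1} is already handled: it is exactly Lemma \ref{triv_lemma}, which says that if $\cG_1$ is essential then $p_1(\Ga)$ is a union of irreducible components of $\cG_1$ for every irreducible component $\Ga$ of $\cG$. (Here we do not even need $\cG$ essential, so this direction is unconditional.) So the real content is \ref{ess_1} $\Rightarrow$ \ref{ess_2}, and this is where the hypothesis that $\cG$ itself is essential must be used.

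For \ref{ess_1} $\Rightarrow$ \ref{ess_2}, I would argue by contraposition, or equivalently produce, from a "bad" state for $\cG_1$, a bad irreducible component for $\cG$. Using Lemma \ref{ess_lemma}, $\cG_1$ fails to be essential exactly when there is a reaction $y\to y\p\in\cR_1$ with $y\p\not\to_{\cG_1} y$; pick such a reaction. The idea is to find a state $x\in\Z_{\geq 0}^\cS$ with $p_1(x)=y$ that lies in some irreducible component $\Ga$ of $\cG$ — this is where essentiality of $\cG$ enters, since in an essential network every state is in an irreducible component, and in particular we can take any $x$ with $p_1(x)=y$ and $p_2(x)$ chosen so that $x$ is in an irreducible component of $\cG$ (for instance, start from $x$ with $p_1(x)=y$ and, since $\cG$ is essential, $x$ already lies in an irreducible component $\Ga$). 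Now fire the reaction $y\to y\p$ in $\cG$ from $x$: since $y\to y\p\in\cR_1\subseteq\cR$ and it is active on $x$ (because it is active on $p_1(x)=y$, using \ref{C1} of Lemma \ref{Lemma_ess} in the reaction-disjoint case), we reach a state $x\p\in\Ga$ with $p_1(x\p)=y\p$. Since $\Ga$ is irreducible, $x\in\Ga$ is accessible from $x\p$ in $\cG$; projecting this accessibility path to the $\cS_1$-coordinates (reactions in $\cR_2$ do not move those coordinates, reactions in $\cR_1$ act on $p_1$ exactly as in $\cG_1$) yields $p_1(x\p)=y\p\to_{\cG_1} p_1(x)=y$, contradicting the choice of $y\to y\p$. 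Hence no such reaction exists and $\cG_1$ is essential.

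An alternative, cleaner packaging of the same idea: assume \ref{ess_1} and let $z\in\Z_{\geq 0}^{\cS_1}$ be arbitrary with an active reaction on it; lift $z$ to some $x\in\Z_{\geq 0}^\cS$ with $p_1(x)=z$, which lies in an irreducible component $\Ga$ of $\cG$ by essentiality of $\cG$; by \ref{ess_1}, $p_1(\Ga)$ is a union of irreducible components of $\cG_1$, so $z=p_1(x)\in p_1(\Ga)$ lies in an irreducible component of $\cG_1$; as $z$ was arbitrary (and states with no active reaction are trivially irreducible components), $\cG_1$ is essential by Lemma \ref{ess_lemma}.

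The main obstacle is the bookkeeping in the projection-of-paths step: one must check carefully that accessibility in $\cG$ projects to accessibility in $\cG_1$, i.e. that a path of reactions in $\cR=\cR_1\,\dot\cup\,\cR_2$ from $x\p$ to $x$ gives, after deleting the $\cR_2$-steps and projecting, a valid $\cG_1$-path from $y\p$ to $y$ — this uses that $\cR_1$-reactions are active on $x$ iff active on $p_1(x)$ (Lemma \ref{Lemma_ess}\ref{C1}) and that their intensities and increments depend only on the $\cS_1$-coordinates, facts already recorded in the setup and used in the proof of Theorem \ref{patch_CRN1}. Everything else is a direct application of Lemmas \ref{ess_lemma}, \ref{triv_lemma} and \ref{Lemma_ess}.
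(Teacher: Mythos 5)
Your direction \ref{ess_2} $\Rightarrow$ \ref{ess_1} (via Lemma \ref{triv_lemma}) is exactly what the paper does, and your ``alternative, cleaner packaging'' of \ref{ess_1} $\Rightarrow$ \ref{ess_2} is also essentially the paper's proof: surjectivity of $p_1$ plus essentiality of $\cG$ shows every $z\in\Z_{\geq 0}^{\cS_1}$ lies in some $p_1(\Ga)$, and \ref{ess_1} then places $z$ in an irreducible component of $\cG_1$. That two-line argument is complete (you do not even need to treat states without active reactions separately), and if you had led with it there would be nothing to criticize.

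Your primary argument for \ref{ess_1} $\Rightarrow$ \ref{ess_2}, however, has a genuine gap. First, it never actually invokes \ref{ess_1}: it attempts to derive ``$\cG$ essential $\Rightarrow$ $\cG_1$ essential'' outright, which is false. Take $\cG_1=\{S_0\to S_1\}$ and $\cG_2=\{S_1\to S_0\}$: the union $\cG=\{S_0\rightleftharpoons S_1\}$ is reversible, hence essential, while $\cG_1$ is not. The step that fails is the projection of the accessibility path: you assert that reactions in $\cR_2$ ``do not move'' the $\cS_1$-coordinates, but reactions in $\cR_2$ act on all of $\cS_2$, including the shared species $\cS_1\cap\cS_2$, so an $\cR_2$-step can change $p_1$ of the state. (Lemma \ref{Lemma_ess}\ref{C1} gives you activity of an $\cR_1$-reaction on $x$ versus $p_1(x)$; it does not give you that $\cR_2$-steps are invisible under $p_1$ --- that is only true when $\cS_1\cap\cS_2=\emptyset$, the trivial case of Remark \ref{simplest_case}.) Consequently a $\cG$-path from $x\p$ back to $x$ need not project to a $\cG_1$-path from $y\p$ to $y$, and the contradiction with $y\p\not\to_{\cG_1}y$ does not materialize. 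Drop the contraposition argument and keep the surjectivity one.
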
 
   \begin{proof} For \ref{ess_1} $\implies $ \ref{ess_2} it suffices to observe that the projection $p_1$ is surjective, hence as $\Z_{\geq 0}^\cS$ is a union of irreducible components of $\cG$, we have that $p_1(\Z_{\geq 0}^\cS)=\Z_{\geq 0}^{\cS_1}$ is a union of irreducible components of $\cG_1$. In particular, $\cG_1$ is essential.\hfill\break
   \ref{ess_2} $\implies $ \ref{ess_1} follows from \color{black}Lemma \color{black}\ref{triv_lemma}.
   %, let $p_1(x)\in \Z_{\geq 0}^{\cS_1}$ be the projection of an element $x\in \Ga$ of an irreducible component of $\cG$. Assume $p_1(x)\to_{\cG_1} p_1(x\p)$. Then, by $\cG_1$ being essential and \color{blue}Lemma \color{black}\ref{ess_lemma} we have $p_1(x\p)\to_{\cG_1} p_1(x)$, hence $p_1(x)$ is in an irreducible component of $\cG_1$. As $x\in \Ga$ and $\Ga$ were arbitrary, we have shown \ref{ess_1}.
   \end{proof}
      \begin{remark}  Note that this implies in particular that an essential CRN $\cG$ has a decomposition into $\cG=\cG_1\cup \cG_2$ with state space decomposition as in Theorem \ref{patch_CRN1} for every irreducible component if and only if there is a decomposition with both $\cG_1,\cG_2$ essential. Furthermore even if $\cG=\cG_1\cup \cG_2$ and $\cG_1$ are essential, there might still be no such decomposition(cf. the example of Remark \ref{rmk_ess}).
   \end{remark}
Hence, if $\cG=\cG_1 \cup \cG_2$ can be decomposed such that $\cG_1$ and $\cG_2$ are essential, we know by \color{black}Lemma \color{black}\ref{Lemma_ess} that $\cG$ is essential. Furthermore by \color{black}Lemma \color{black}\ref{triv_lemma} the projections of irreducible components of $\cG$ are decomposed into unions of irreducible components of \color{black}$\cG_i,i\in\{1,2\}$\color{black}. Therefore, in this case, we can restate Theorem \ref{patch_CRN1} in a simplified form.

 \begin{theorem}\label{patch_CRN3}
  Let $\cG=\cG_1 \cup \cG_2$ be a reaction network that can be decomposed as a reaction-disjoint union such that $\cG_1,\cG_2$ are essential. Let $\Ga$ be an irreducible component of $\cG$. Assume that 
the irreducible components of
  $p_1(\Ga)$ %$p_1(p_2^{-1}(p_2(x))\cap\Ga)$
   of $\cG_1$ (i.e.  $p_1(\Ga)=\dot\bigcup_{i\in \mathcal{I}}\Ga_i^1)$ have stationary distributions of the form
  $$\pi_1(p_1(x))=\frac{1}{Z}\prod_{S_i\in\cS_1\setminus\cS_1\cap\cS_2}f_i(x_i)\prod_{S_i\in\cS_1\cap\cS_2}f^1_i(x_i),
  $$%{\cS_1\setminus (\cS_1\cap \cS_1)
   and the irreducible components of
  $p_2(\Ga)$ %$p_1(p_2^{-1}(p_2(x))\cap\Ga)$
   of $\cG_2$ (i.e.  $p_2(\Ga)=\dot\bigcup_{i\in \mathcal{I}}\Ga_i^2)$ have stationary distributions of the form
    $$\pi_2(p_2(x))=\frac{1}{Z}\prod_{S_i\in\cS_2\setminus\cS_1\cap\cS_2}f_i(x_i)\prod_{S_i\in\cS_1\cap\cS_2}f^2_i(x_i).
  $$
Furthermore, assume that there is an $\a>0$ such that for all $x\in \Ga$ and all $S_i\in\cS_1\cap\cS_2$ we have 
  $$\a f_{i}^1(x_i)=f^2_{i}(x_i).$$
  
Then, $\cG=\cG_1 \cup \cG_2$ has a product-form stationary distribution of the form
\begin{equation}\label{eq_sol2}
\pi(x)=\frac{1}{Z}\prod_{S_i\in\cS}f_i(x_i),
\end{equation}
where for $S_i\in\cS_1\cap\cS_2$ we set $f_i:=f^2_{i}$ on the irreducible component $\Ga$ %which is generalized balanced with \color{black}$\{(\cR_i,\cR_i)_{i\in \{1,2\}}\}$\color{black}, 
if \eqref{eq_sol2} is summable.
  \end{theorem}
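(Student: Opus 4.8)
The plan is to reduce Theorem \ref{patch_CRN3} to Theorem \ref{patch_CRN1}, which has already done the real analytic work. The only difference between the two statements is that here the structural hypothesis \ref{B1} (namely, that $p_1(\Ga)$ is a union of irreducible components of $\cG_1$, and likewise for $\cG_2$) has been replaced by the cleaner hypothesis that both $\cG_1$ and $\cG_2$ are essential. So the first step is to invoke Lemma \ref{triv_lemma}: since $\cG_1$ is essential, for any irreducible component $\Ga$ of $\cG$ the projection $p_1(\Ga)$ is a union of irreducible components of the stochastic dynamics of $\cG_1$; symmetrically for $p_2(\Ga)$ and $\cG_2$. (One can also note that $\cG = \cG_1 \cup \cG_2$ is itself essential by \ref{C2} of Lemma \ref{Lemma_ess}, so that ``irreducible component of $\cG$'' is a meaningful notion covering the whole state space, but this is not strictly needed for the argument.)

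Next I would observe that, with $p_1(\Ga) = \dot\bigcup_{i\in\mathcal{I}}\Ga_i^1$ established, the assumed form of $\pi_1$ on each $\Ga_i^1$ is exactly the product form required in \ref{B1}, and similarly the assumed form of $\pi_2$ is exactly that of \ref{B2}. The proportionality hypothesis $\a f_i^1(x_i) = f_i^2(x_i)$ for all $x\in\Ga$ and all $S_i \in \cS_1\cap\cS_2$ is verbatim hypothesis \ref{B3}. Hence all three hypotheses \ref{B1}, \ref{B2}, \ref{B3} of Theorem \ref{patch_CRN1} hold for this $\cG$ and this $\Ga$.

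Applying Theorem \ref{patch_CRN1} then yields directly that, provided \eqref{eq_sol2} is summable, $\pi(x) = \frac{1}{Z}\prod_{S_i\in\cS}f_i(x_i)$ with $f_i := f_i^2$ for $S_i\in\cS_1\cap\cS_2$ is a product-form stationary distribution for $\cG$ on $\Ga$ — and moreover that $\cG$ is generalized balanced for $\pi$ on $\Ga$ with $\{(\cR_i,\cR_i)_{i\in\{1,2\}}\}$, which is more than the statement asks. This completes the proof.

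Since the argument is purely a matter of checking that the hypotheses of the earlier theorem are met, there is no real obstacle; the one point deserving a sentence of care is that Lemma \ref{triv_lemma} is applied to \emph{each} of $\cG_1$ and $\cG_2$ (the lemma is stated for $\cG_1$, but by the symmetry of the union $\cG = \cG_2 \cup \cG_1$ it applies equally to $\cG_2$), so that both \ref{B1} and \ref{B2} are genuinely available and not just one of them. Everything else — the product-form bookkeeping over $\cS_1\setminus\cS_2$, $\cS_2\setminus\cS_1$, and $\cS_1\cap\cS_2$, and the summability caveat — is inherited unchanged from Theorem \ref{patch_CRN1}.
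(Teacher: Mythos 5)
Your proposal is correct and matches the paper's own derivation: the paper likewise obtains Theorem \ref{patch_CRN3} as a restatement of Theorem \ref{patch_CRN1}, using Lemma \ref{Lemma_ess} to get that $\cG$ is essential and Lemma \ref{triv_lemma} (applied to each of $\cG_1$ and $\cG_2$) to verify that the projections of $\Ga$ decompose into unions of irreducible components, so that hypotheses \ref{B1}--\ref{B3} hold. No gaps.
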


\color{black}
Then consecutive applications of Theorem \ref{patch_CRN3} along decompositions of essential CRNs gives the following.
\begin{corollary}\label{mult_appl}
Let $\cG$ be a reaction network that can be decomposed as a reaction-disjoint union such that $\cG=\cG_1\cup\cdots\cup \cG_s$ with all the $\cG_j$ essential. Denote by $\cS_j^{only}$ the species that are only in $\cS_j$ and no other $\cS_i, i\neq j$, and by $\cS_j^{shared}$ the species in $\cS_j$ that are also in at least one other $\cS_i,i\neq j$.
Assume that $\Ga$ is an irreducible component of $\cG$ and each $\cG_j$ has product-form stationary distribution of the form 
$$\pi_j(p_j(x))=\frac{1}{Z}\prod_{S_i\in\cS_j^{only}}f_i(x_i)\prod_{S_i\in\cS_j^{shared}}f^j_i(x_i),
  $$
  on its irreducible component in $p_j(\Ga)$ such that, if $\cS_j\cap \cS_k\neq \emptyset$, then there is an $\a>0$ such that for all $S_i \in \cS_j\cap \cS_k$ and all $x\in \Ga$ we have
   $$\a f_{i}^j(x_i)=f^k_{i}(x_i).$$
   Then, $\cG=\cG_1\cup\cdots\cup \cG_s$ has a product-form stationary distribution of the form
\begin{equation}\label{eq_solp}
\pi(x)=\frac{1}{Z}\prod_{S_i\in\cS}f_i(x_i),
\end{equation}
where if $S_i$ is in $\cS_j\cap \cS_k$ %contained in all of the following species sets $\cS_{i_1},\cdots\cS_{i_f}$ and no other, 
we choose $f_i:=f_i^{j}$ arbitrary, such that the stationary distribution on $\Ga$ is generalized balanced with $\{(\cR_i,\cR_i)_{i\in \{1,\cdots,s\}}\}$, if \eqref{eq_solp} is summable.
\end{corollary}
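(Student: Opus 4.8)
The plan is to induct on the number $s$ of pieces, stripping off one network at a time and invoking Theorem \ref{patch_CRN3} at each step, in accordance with the phrasing of the Corollary. The case $s=1$ is vacuous and $s=2$ is exactly Theorem \ref{patch_CRN3}. For the inductive step write $\cG=\cG'\cup\cG_s$ with $\cG'=\cG_1\cup\cdots\cup\cG_{s-1}$. Since the reaction sets $\cR_1,\dots,\cR_s$ are pairwise disjoint, $\cG=\cG'\cup\cG_s$ is again a reaction-disjoint union, and iterating part \ref{C2} of Lemma \ref{Lemma_ess} shows $\cG'$ is essential. Then Lemma \ref{triv_lemma} (applied once to $\cG=\cG'\cup\cG_s$, and once to $\cG=\cG_j\cup\bigcup_{i\neq j}\cG_i$ for each $j$) shows that $p'(\Ga)$ and each $p_j(\Ga)$ are unions of irreducible components of $\cG'$, resp.\ of $\cG_j$, so the state-space hypothesis of Theorem \ref{patch_CRN3} is in force and the product-form data of each $\cG_j$ lives on genuine irreducible components inside $p_j(\Ga)$.

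The substance of the step is bookkeeping the species partition. Relative to the two-fold split $\cG=\cG'\cup\cG_s$ one checks that $\cS'\cap\cS_s=\cS_s^{shared}$, that the "only" part of $\cG_s$ in this split coincides with $\cS_s^{only}$, and that the "only" part of $\cG'$ equals $\big(\bigcup_{j<s}\cS_j\big)\setminus\cS_s$. With these identifications, the inductive hypothesis applied to $\cG'$ on each irreducible component of $\cG'$ inside $p'(\Ga)$ produces a stationary distribution of the form $\frac1Z\prod_{S_i\in\cS'}f_i(x_i)$, which supplies the $\cG'$-half of the hypotheses of Theorem \ref{patch_CRN3}, the functions on $\cS'\cap\cS_s$ being the $f_i$ constructed by the induction. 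The required compatibility between $\cG'$ and $\cG_s$ is then obtained species by species: for $S_i\in\cS'\cap\cS_s$ pick $j<s$ with $S_i\in\cS_j$ and chain the proportionality $f_i\propto f_i^j$ coming from the inductive construction with $f_i^j\propto f_i^s$ coming from the hypothesis of the Corollary (valid on $p_{S_i}(\Ga)$); since the proportionality constant depends on $S_i$ through the choice of $j$, one uses here the per-species form of the compatibility condition recorded in Remark \ref{obs_2}. Theorem \ref{patch_CRN3} (in that slightly strengthened form) then delivers the product form \eqref{eq_solp} on $\Ga$, provided it is summable, with $f_i:=f_i^s$ on $\cS'\cap\cS_s$ — and any other admissible choice of $f_i$ there differs only by a positive constant absorbed into $Z$.

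Two points need care. First, summability at intermediate stages: Theorem \ref{patch_CRN3} yields a genuine normalised stationary distribution only under a summability hypothesis, whereas the summability assumed in the Corollary concerns the final object \eqref{eq_solp} alone. It is therefore cleanest to run the induction at the level of the master equation, i.e.\ to prove that the \emph{un-normalised} product $\prod_{S_i\in\cS}f_i$ satisfies every $\cR_j$-balance equation \eqref{gen_cxbal} on $\Ga$ — which is the purely algebraic content of the proofs of Theorems \ref{patch_CRN1} and \ref{patch_CRN3} — and to invoke normalisability only once, for \eqref{eq_solp}. Second, the generalized balance with the finest tuples $\{(\cR_j,\cR_j)_{j\in\{1,\dots,s\}}\}$ is not literally the output of nesting the two-element conclusion of Theorem \ref{patch_CRN1}; it is obtained directly, exactly as in the proof of Theorem \ref{patch_CRN1}, by observing that for each $j$ the $\cR_j$-balance equation for $\cG$ reduces — after factoring out $\prod_{S_i\notin\cS_j}f_i(x_i)$, which is positive, and using that the reactions of $\cR_j$ and their intensities involve only the $\cS_j$-coordinates — to the master equation of $\cG_j$ at $p_j(x)$, which holds because $\prod_{S_i\in\cS_j}f_i$ is, up to a positive constant (by the compatibility on shared species), the assumed stationary distribution of $\cG_j$ on the irreducible component of $p_j(\Ga)$ containing $p_j(x)$ (the singleton-component case being handled as in the proof of Theorem \ref{patch_CRN1}). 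The main obstacle is thus organisational rather than analytic: keeping the split of $\cS$ into "only" and "shared" parts, and the attendant proportionality constants, mutually coherent as the pieces $\cG_s,\cG_{s-1},\dots$ are peeled off one at a time.
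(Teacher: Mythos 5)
Your proof is correct and follows the paper's own route: the paper justifies the corollary solely by ``consecutive applications of Theorem \ref{patch_CRN3}'', which is exactly your induction on $s$ peeling off one essential piece at a time. Your two refinements --- running the induction on the un-normalised balance equations so that summability is invoked only once for \eqref{eq_solp}, and verifying the $\cR_j$-balance for each $j$ directly (via Lemma \ref{triv_lemma} applied to $\cG=\cG_j\cup\bigcup_{i\neq j}\cG_i$) to obtain generalized balance with the finest tuples $\{(\cR_j,\cR_j)_{j\in\{1,\dots,s\}}\}$ together with the per-species constants of Remark \ref{obs_2} --- correctly fill in details the paper leaves implicit.
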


\color{black}
 \begin{remark} By the completeness of the results for complex balanced CRN (cf. $\S$ \ref{res_stat}) it is clear we can not say more about complex balanced CRN. The same holds for a similar reason for autocatalytic CRNs since we generalise the underlying proof principle of \cite{hoesslysta}, cf. \color{black}Example \color{black}\ref{auto_ex}. However, we offer a framework that can combine autocatalytic, complex balanced or other CRNs, as long as the stationary distributions are of product form and agree on the species in common. Note that it is easy to find small CRNs beyond complex balance with product form stationary distribution, and we cover only some. In particular there are both reversible, weakly reversible or non-weakly reversible CRN with product-form stationary distributions which can be combined in the framework we developed (cf. $\S$ \ref{stochCRNEx}).
  \end{remark}
\begin{remark} As another example consider 
$$S_0\to S_1 \rightleftharpoon S_2 \rightleftharpoon S_3$$
which is not essential, hence there is no reaction-disjoint decomposition into $\cG=\cG_1\cup \cG_2$ such that both are essential. Hence Theorem \ref{patch_CRN1} still applies while 
Theorem \ref{patch_CRN3} does not. \end{remark}

\begin{remark}[Summability]
\color{black}Note that for summability of \eqref{eq_sol1}, \eqref{eq_sol} or \eqref{eq_sol2} it is necessary that the stationary distributions on the projections are summable. In easy cases with infinite state space summability can possibly be checked by the ratio test, see Remark \ref{rem_ex2} for an example.\color{black}
\end{remark}
\color{black}

\subsection{Decomposing CRNs and applications of Theorem \ref{patch_CRN3}}
While general characterisations of existence for decompositions with a view towards Theorems \ref{patch_CRN1}, \ref{patch_CRN2} ,\ref{patch_CRN3} are not our focus, we remark on several issues. 

Note that two reaction-disjoint CRNs which are given by their reactions $\cR_1,\cR_2$ have a union with reaction set $\cR=\cR_1\cup\cR_2$ if and only if $\cR$ can be decomposed as $\cR=\cR_1\cup\cR_2$ (i.e. with the same $\cR_1,\cR_2$) by a reaction disjoint decomposition. %Clearly two CRNs with reaction sets $\cR_1,\cR_2$ of reaction-disjoint CRNs have a union with reaction set $\cR=\cR_1\cup\cR_2$ if and only if $\cR$ can be decomposed as $\cR=\cR_1\cup\cR_2$ by a reaction disjoint decomposition. 
Hence for a given set of reactions $\cR$, there are $2^{|\cR|}-1$ such bipartitions of the reactions, which grows exponentially with $|\cR|$. Correspondingly, brute-force algorithms can be given, e.g., for CRNs consisting of a complex balanced and an autocatalytic part. Therefore if stationary distributions for more classes of CRNs (even beyond product-form but as in Theorem \ref{patch_CRN2}) are known, this can be incorporated in a similar way for essential CRNs. Similarly decompositions along Proposition \ref{mult_appl} can be checked. Note that as gluing over reactions is more general, some CRNs might be decomposable in that sense into essential CRNs where it is not possible for reaction-disjoint CRNs. Feasible strategies to cope with such situations are considerably more difficult than reaction-disjoint decompositions, but might be developed at a later point (see Remark \ref{complexity_glue_react}). We further note that characterisations of when such decompositions exist are mostly unknown, and even in the essential case we currently only have characterisations for CRNs with stationary distributions given by Poisson product-form functions by \cite{Cappelletti}. 
\begin{remark}\label{complexity_glue_react}
While the above if and only if statement still holds for CRNs where we glue over reactions,
the number of possible decompositions of a CRN $\cG$ (cf. Definition \ref{def_union}) where we allow gluing over reactions is uncountable, which is not very practical. We henceforth mostly focus on decomposing along reaction-disjoint unions.
\end{remark}

 \section{Applications and examples}\label{stochCRNEx}

We will next go through some examples in order to explain and illustrate the use of the theory developed. \color{black}We mostly focus on mass-action kinetics in examples \ref{ex_class2}, \ref{ex_cx_bal}, \ref{more_compl_cx}, \ref{auto_ex} and \ref{other_ex}, and consider Example \ref{ex_class2} with general kinetics in Example \ref{ex_gen_kin}. We conclude that many such  weakly reversible CRNs of arbitrary deficiency have product-form stationary distribution independent of the rate and independent of the kinetics. While we only used the theory for CRNs, it applies to other stochastic networks and CTMCs as well. \color{black} Furthermore recall that irreducible components of conservative CRNs are finite, hence the limiting distribution is the unique stationary distribution (cf. Remark \ref{rmk_pos_cons}).

\subsection{Examples with Mass-action kinetics}
 
 \begin{example}\label{ex_class2} 
 As a first example consider the following CRN which is reversible and of deficiency two for an application of Theorem \ref{patch_CRN3}.
$$
 S_1 \rightleftharpoon^{\k_1}_{\k_2} S_2 \rightleftharpoon^{\k_5}_{\k_6} S_3,\quad 2S_1 \rightleftharpoon^{\k_3}_{\k_4} S_1+S_2,\quad 2S_3 \rightleftharpoon^{\k_7}_{\k_8} S_2+S_3
 $$
% with product-form stationary distributions
 %$$\pi(x_1,x_2,x_3)=\frac{1}{Z}f_1(x_1)f_2(x_2)f_3(x_3),$$

 We first decompose $\cG=\cG_1\cup\cG_2$ into two essential CRNs:
$$
 \cG_1:S_1 \rightleftharpoon^{\k_1}_{\k_2} S_2,\quad 2S_1 \rightleftharpoon^{\k_3}_{\k_4} S_1+S_2,\quad \quad \cG_2: S_2 \rightleftharpoon^{\k_5}_{\k_6} S_3,\quad 2S_3 \rightleftharpoon^{\k_7}_{\k_8} S_2+S_3
 $$
 Then we analyse the stationary distributions of $\cG_1,\cG_2$ on their own in order to apply Theorem \ref{patch_CRN3} at the end.
 
 Similar to the example of Remark \ref{rem_union_not_closed}, $\cG_1$ is only for some values detailed balanced. It has a stationary distribution of the form (see Remark \ref{rem_ex2})
 \begin{equation} \label{ME_ex2}\pi(x_1,x_2)=\frac{1}{Z}f_1(x_1)f^2_2(x_2),\end{equation}
on irreducible components $\Ga^1_N=\{x\in\Z_{\geq 0}^2| \sum_{i=1}^2x_i=N \}$,
 %where Z is a normalising constant and 
 where $f_1,f_2^2$ have the following form for $d_1>0$
 $$f_1(x_1)=\frac{d_1^{x_1}}{x_1!}\prod_{l=1}^{x_1} \frac{\k_2+\k_4(l-1)}{\k_1+\k_3(l-1)} ,\quad f^2_2(x_2)=\frac{d_1^{x_2}}{x_2!}.$$
Note that $x_1+x_2$ is constant on the irreducible components $\Ga^1_i$, so also $d_1^{x_1+x_2}$ is a constant along irreducible components.

 Next consider $\cG_2$ %of the same form (also essential), i.e.
 %
 %$$
% S_2 \rightleftharpoon^{\k_5}_{\k_6} S_3,\quad 2S_3 \rightleftharpoon^{\k_7}_{\k_8} S_2+S_3
 %$$
with stationary distribution  (again with $d_2>0$)
 $$\pi(x_2,x_3)=\frac{1}{Z}f^2_2(x_2)f_3(x_3);$$
 $$ f^2_2(x_2)=\frac{d_2^{x_2}}{x_2!}, \quad f_3(x_3)=\frac{d_2^{x_3}}{x_3!}\prod_{l=1}^{x_3} \frac{\k_6+\k_8(l-1)}{\k_5+\k_7(l-1)}.$$
Now we look at $\cG=\cG_1\cup\cG_2$ in order to apply Theorem \ref{patch_CRN3}. We choose $d_1=d_2=1$ so that the product-functions agree. Then the stationary distribution of $\cG$ is as follows, 
$$\pi(x_1,x_2,x_3)=\frac{1}{Z}f_1(x_1)f^2_2(x_2)f_3(x_3),$$

 where the product form functions are
 % $$\pi(x_1,x_2,x_3)=\frac{1}{Z}\frac{1}{x_1!x_2!x_3!}\prod_{l=1}^{x_1} \frac{\k_2+\k_4(l-1)}{\k_1+\k_3(l-1)}\prod_{l=1}^{x_3}\frac{\k_6+\k_8(l-1)}{\k_5+\k_7(l-1)}$$
 $$f_1(x_1)=\frac{1}{x_1!}\prod_{l=1}^{x_1} \frac{\k_2+\k_4(l-1)}{\k_1+\k_3(l-1)} ,\quad f_2(x_2)=\frac{1}{x_2!},\quad f_3(x_3)=\frac{1}{x_3!}\prod_{l=1}^{x_3} \frac{\k_6+\k_8(l-1)}{\k_5+\k_7(l-1).}$$
We further note that the summability in this example is trivial as the irreducible components are finite.
 \end{example}

  \begin{remark}\label{rem_ex2} For $\cG_1$ of \color{black}Example \color{black}\ref{ex_class2} observe the following
 \begin{itemize}
  \item On an irreducible component with a product-form stationary distribution and a conservation relation, we will mostly factor out a constant $d>0$. As an example, consider $\cG_1$ where $x_1+x_2$ is constant on the irreducible components $\Ga^1_i$, so also $d_1^{x_1+x_2}$ is a constant along irreducible components. Then as we divide by the normalising constant the corresponding stationary distributions are all the same for different $d_1>0$. 

 \item $\cG_1$ is reaction vector balanced independently of the rates. We can verify that \eqref{ME_ex2} is reaction vector balance (and hence the stationary distribution) for $\cG$ by checking the following% holds
%\begin{equation}
\begin{align*}
 \pi(x_1+1,x_2-1)(x_1+1)(\k_1+\k_3x_1)= \pi(x_1,x_2)x_2(\k_2+\k_4x_1)\\
 \pi(x_1-1,x_2+1)(x_2+1)(\k_2+\k_4(x_1-1))= \pi(x_1,x_2)x_1(\k_1+\k_3(x_1-1))
\end{align*}

  \item  For $\k_3=\a\k_1,\k_4=\a\k_2$ with $\alpha>0$, $\cG_1$ is detailed (hence complex) balanced, and we can factorize out in $f_1$ from \eqref{ME_ex2} to obtain
 $$f_1(x_1)=\frac{d_1^{x_1}}{x_1!}\big(\frac{\k_1}{\k_2}\big)^{x_1}, \quad f^2_2(x_2)=\frac{d_1^{x_2}}{x_2!}.$$
 To transform this into a standard form, we can choose $d_1=\k_2$.
\item We can join $\cG_1$ with the following essential CRN $\cG_2$
$$\emptyset \rightleftharpoon_{\k_-}^{\k_+} S_2$$
with stationary distribution $\pi(x_2)=\frac{1}{Z}f_2^2(x_2)$, with $f_2^2(x_2)= \frac{c_2^{x_2}}{x_2!}$ and where $c_2=\frac{\k_+}{\k_-}$ is a point of complex balance. Then choosing $d_1=c_2$ makes the product-form functions $f^1_2,f^2_2$ equal. Therefore if the following is summable, it is the stationary distribution
$$\pi(x_1,x_2)=\frac{1}{Z}f_1(x_1)f_2^2(x_2),$$
where we have to check that the following sum is finite $$\sum_{(x_1,x_2)\in\Z_{\geq 0}^2}\frac{c_2^{x_1}}{x_1!}\prod_{l=1}^{x_1} \frac{\k_1+\k_3(l-1)}{\k_2+\k_4(l-1)}\frac{c_2^{x_2}}{x_2!}=\exp(c_2)\sum_{x_1\in\Z_{\geq 0}^1}\frac{c_2^{x_1}}{x_1!}\prod_{l=1}^{x_1} \frac{\k_1+\k_3(l-1)}{\k_2+\k_4(l-1)}.$$
%\end{equation}
Therefore it is easy to see, e.g. by the ratio test for series, that the series converges for all positive rate parameters.
 \end{itemize}
  \end{remark}

 \begin{example}\label{ex_cx_bal} Consider the CRN of \cite[Example 4.4]{hoesslysta}. 
 $$\begin{tikzcd} [ row sep=1em,
  column sep=1em]
S_1\arrow[r, shift left=1ex,"\a_{1,2}^1"]  &S_2 \arrow[l,"\a_{2,1}^1"]\\ 
 S_1+S_2 \arrow[r, "\a_{1,2}^2"]  &2S_2 \\
 2S_1 \arrow[rr, shift left=1ex,"\k_1"] 
%\arrow[dr,  shift left=1ex,"\la_3"] 
&& 2S_3 \arrow[dl, "\k_2"] \\
& S_1+S_3\arrow[ul,"\k_3"]
\end{tikzcd}
$$
Then $\cG_1$ is autacatalytic and corresponds to reactions between $S_1,S_2$ and $\cG_2$ is complex balanced and corresponds to the reactions between $S_1,S_3$.
Hence $\cG_1,\cG_2$ are essential, and we may apply Theorem \ref{patch_CRN3} after deriving the stationary distributions of $\cG_1$ and $\cG_2$, giving an easy way to compute the stationary distribution of \cite[Example 4.4]{hoesslysta}.
%The irreducible components of $\cG$ are of the form $\Ga_N=\{x\in\Z_{\geq 0}^3| \sum_{i=1}^3x_i=N \},N\geq 2$, correspondingly the irreducible components of the projections are
%$p_1(\Ga_N)=\cup_{0\leq i\leq N}\Ga^1_i$ and $p_2(\Ga_N)=\cup_{2\leq i\leq N}\Ga^2_i\cup (\cup_{x\in\Z^2_{\geq 0}|x_1\leq 1,x_3\leq 1}\{(x_1,x_3)\})$.
%The stationary distribution of $\cG_1$ is 
%$\pi(x_1,x_2)=\frac{1}{Z}f^1_1(x_1)f_2(x_2)$, with (cf., i.e., ex. \ref{auto_ex})
%$$f^1_1(m)=\frac{(\a_{2,1}^1\cdot d_1)^m}{m!}, \quad f_2(m)=\frac{(\a_{1,2}^1\cdot d_1)^m}{m!}\prod_{l=1}^m(\sum_{k=1}^{2}\frac{\a_{1,2}^k}{a_{1,2}^1}\prod_{r=1}^{k-1}(l-r)).$$
%The stationary distribution for a complex balanced $\cG_2$ with $(c_1,c_3)$ a point of complex balance is
%$\pi(x_1,x_3)=\frac{1}{Z}f^2_1(x_1)f_3(x_3),\quad \quad f^2_1(m)=\frac{(c_1\cdot d_2)^m}{m!}, \quad f_3(m)=\frac{(c_3\cdot d_2)^m}{m!}.$$
%Then we can choose $d_1=1,d_2=\frac{\a_{2,1}^1}{c_1}$ to obtain the stationary distribution
%$$\pi(x_1,x_2,x_3)=\frac{(\a_{2,1}^1)^{x_1}}{x_1!}\frac{\prod_{j=1}^{x_2}(\a^1_{1,2}+(j-1)\a^2_{1,2})}{x_2!}\frac{(c_3\p)^{x_3}}{x_3!}$$
 %with $c_3\p=c_3\cdot \frac{\a_{2,1}^1}{c_1}$.{ \color{black}This implies in particular that local versions of the Poisson-form stationary distribution in all species(or complexes) implies (deterministically) complex balance does not hold in general \cite{Cappelletti}.}

% $$\pi(x_1,x_2)=\frac{(\a_{2,1}^1)^{x_1}}{x_1!}\frac{1}{x_2!}\prod_{j=1}^{x_2}(\a^1_{1,2}+(j-1)(j-2)\a^3_{1,2})$$
% is a stationary measure.
\end{example}

This shows how to systematically decompose some examples of CRNs into smaller parts where the stationary distribution is known and of product-form. As another example consider the following CRN where we glue along two species.
\begin{example}\label{more_compl_cx} Let $\cG_1$ be the following essential CRN
$$S_1\rightleftharpoons S_2 \rightleftharpoon S_3 \rightleftharpoons S_4, $$
$$S_1+S_2\to 2S_2,\quad S_2+S_3\to 2S_2, \quad 2S_4 \rightleftharpoon S_3+S_4$$
We can choose the parameters to obtain an autocatalytic CRN according to \cite{hoesslysta} on $S_1,S_2,S_3$, and join it with the CRN on $S_3,S_4$ (which was \color{black}Example \color{black} \ref{ex_class2}) with stationary distribution of the form
$$\pi(x_1,x_2,x_3,x_4)=\frac{1}{Z}f^1_1(x_1)f_2(x_2)f^1_3(x_3)f_4(x_4)$$
with $f_1^1(x_1),f^1_3(x_3)$ of Poisson product-form.

Consider as $\cG_2$ the following complex balanced (hence weakly reversible, essential) CRN:
$$3S_3\rightleftharpoon 3S_5$$
$$\begin{tikzcd} [ row sep=1em,
  column sep=1em]
 2S_1 \arrow[rr, shift left=1ex,] 
%\arrow[dr,  shift left=1ex,"\la_3"] 
&& 2S_3 \arrow[dl, ] \\
& S_1+S_3\arrow[ul,]
\end{tikzcd}
$$
with stationary distributions of the form
$$\pi(x_1,x_3,x_5)=\frac{1}{Z}f^2_1(x_1)f^2_3(x_3)f_5(x_5)$$
$$ f^2_1(x_1)=\frac{(c_1d_2)^{x_1}}{x_1!}, f^2_3(x_3)=\frac{(c_3d_2)^{x_3}}{x_3!}, f_5(x_5)=\frac{(c_5d_2)^{x_5}}{x_5!} $$
with $(c_1,c_3,c_5)$ a point of complex balance. Then, if the rates of $\cG_1,\cG_2$ are such that the product-form functions $f^1_1,f^1_3$ and $f^2_1,f^2_3$ can be chosen to be the same, we can give the stationary distribution of $\cG=\cG_1\cup \cG_2$.
\end{example}

 \begin{example}\label{auto_ex} Now we consider autocatalytic CRNs \cite{hoesslysta}. Interacting particle systems of that form are used in inclusion processes from statistical physics and the modelling of ants and swarms \cite{inclusion_proc,hoesslysta,swarm}. Consider a CRN $\cG_1$ on 2 species CRN with the reactions
 $$S_1 \rightleftharpoon_{\a^1_{1,2}}^{\a^1_{2,1}} S_2$$
 together with reactions in any of the following form
 \begin{equation} \label{basic_re}S_2+(m-1)S_1\too^{\a^m_{2,1}} mS_1, \end{equation}
where $m\geq1$. Note that such CRNs are essential. Then obtaining the stationary distributions for such CRNs on two species and assembling them
leads to the stationary distributions of autocatalytic CRNs from \cite{hoesslysta}, which can also be obtained via the decomposition into joins with Theorem \ref{patch_CRN3}.
 \end{example}

% \color{black}
 %\begin{remark}\label{rem_aut} Concerning \color{black}Example \color{black}\ref{auto_ex}, notice that 
% \begin{itemize}
 %\item 
 %as in \cite{hoesslysta}, we can start with product-functions for $\cG_1$ (and $\cG_2$),
%$$g_1(m)=\frac{1}{m!}\prod_{l=1}^m(\sum_{k=1}^{n_i}\a_{2,1}^k\prod_{r=1}^{k-1}(l-r)), \quad g_2(m)=\frac{1}{m!}\prod_{l=1}^m(\sum_{k=1}^{n_i}\a_{1,2}^k\prod_{r=1}^{k-1}(l-r)),$$
%factorise out $\a_{2,1}^1,\a_{1,2}^1$  and change them with the $d_1>0$ to arrive at $f_1,f_2^1$.

%\end{itemize}
%\end{remark}
\color{black}

\color{black}  

\begin{example}\label{other_ex} While ergodic conservative CRNs with product-form stationary distributions have a degree of freedom to choose the product-form function, that is not the case for 
other ergodic CRNs. As an example consider
$$
 S_1 \rightleftharpoon^{\k_1}_{\k_2} S_2\rightleftharpoon_{\k_+}^{\k_-}\emptyset,\quad 2S_1 \rightleftharpoon^{\k_3}_{\k_4} S_1+S_2,\quad
 S_1 \rightleftharpoon^{\k_6}_{\k_7} S_3\rightleftharpoon_{\k_{+,2}}^{\k_{-,2}}\emptyset,\quad 2S_1 \rightleftharpoon^{\k_8}_{\k_9} S_1+S_3,
 $$
 which can be decomposed into two CRNs along Example \ref{ex_class2} and Remark \ref{rem_ex2}. Then, application of Theorem \ref{patch_CRN1} requires that the parameters match in some sense and further summability also has to be taken care of.
\end{example}

\subsection{Examples with more general kinetics}
We recall the setting of more general intensity functions from \cite{anderson2} which are given as
\begin{equation}
  \lambda_{\nu\to\nu\p}(x) = \k_{\nu\to\nu\p} \prod_{S_i\in \cS} \prod_{j=0}^{\nu_{i}-1}
  \theta_i(x_i - j) %= \k_{\nu\to\nu\p} \prod_{i = 1}^m, 
  %\theta_i(x_i)\theta_i(x_i - 1)\theta_i(x_i - (\nu_{i} - 1)),
  \label{eq:gen_kin}
\end{equation}
where the $\k_{\nu\to\nu\p}$ are positive reaction rates and $\theta_i : \Z \to
\R_{\ge 0}$ are such that $\theta_i(x) = 0$ if and only if $x \le 0$ (we use the convention
that $\prod_{j=0}^{-1}a_j = 1$ for any $\{a_j\}$).
Typical kinetics used in mathematical biology are, e.g.,
$$x\mapsto \frac{x^m}{k^m+x^m},\quad \quad \quad x\mapsto \frac{k_1^m}{k_2^m+x^m},$$
called Hill-type I/II in \cite{grima_kin}, where $m$ is an integer and $k,k_1,k_2$ are positive constants. The first specialises to stochastic Michaelis-Menten kinetics for $m=1$ \cite{anderson2}. 

\begin{example}\label{ex_gen_kin}
Consider again Example \ref{ex_class2}, i.e.,
$$
 S_1 \rightleftharpoon^{\k_1}_{\k_2} S_2,\quad 2S_1 \rightleftharpoon^{\k_3}_{\k_4} S_1+S_2
 $$
  but with general kinetics $\theta_1,\theta_2$. The irreducible components are as in Example \ref{ex_class2}, and the stationary distribution can again be given via the reaction vector balance equations of Remark \ref{rem_ex2}, giving
 \begin{equation} \label{ME_ex2_gen}\pi(x_1,x_2)=\frac{1}{Z}f_1(x_1)f^2_2(x_2),\end{equation}
 where $f_1,f_2^2$ have the following form
 $$f_1(x_1)=\frac{d_1^{x_1}}{\prod_{l=1}^{x_1}\theta_1(l)}\prod_{l=1}^{x_1} \frac{\k_2+\k_4\theta_1(l-1)}{\k_1+\k_3\theta_1(l-1)} ,\quad f^2_2(x_2)=\frac{d_1^{x_2}}{\prod_{l=1}^{x_2}\theta_2(l)}.$$
  
\end{example}

Hence from Example \ref{ex_gen_kin} (also see Example \ref{ex_class2}) it is easy to see that we can assemble arbitrary CRNs of this form with product form stationary distribution independently of the rates via Theorem \ref{patch_CRN3}. This then gives the following.
\begin{corollary}\label{suff1}
Independent of the kinetics (but with $\theta_2$ fixed), any CRN that is a disjoint union of CRNs of the form 
$$
 S_2 \rightleftharpoon S_i,\quad 2S_i \rightleftharpoon S_2+S_i$$
 for $i\neq 2$ has product-form stationary distribution independent of the rates.
\end{corollary}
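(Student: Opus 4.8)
The plan is to exhibit such a network as a finite reaction-disjoint union of the two-reaction blocks and then invoke the iterated decomposition result, Corollary \ref{mult_appl}. Write the species as $\cS=\{S_2\}\cup\{S_j:j\in J\}$ for a finite index set $J$ with $2\notin J$, and for each $j\in J$ let $\cG_j$ be the block with species $\{S_2,S_j\}$ and reactions $S_2\rightleftharpoons S_j$, $2S_j\rightleftharpoons S_2+S_j$, so that $\cG=\bigcup_{j\in J}\cG_j$; distinct blocks have no reaction in common and meet only in the species $S_2$, so this is a reaction-disjoint union in the sense of Definition \ref{def_union} with overlapping species (the setting of Theorem \ref{patch_CRN3}). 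In the notation of Corollary \ref{mult_appl} this means $\cS_j^{only}=\{S_j\}$ and $\cS_j^{shared}=\{S_2\}$ for every $j$. Each $\cG_j$ is reversible, hence essential by Lemma \ref{weakly_ess}; in particular $\cG$ is itself reversible and essential.

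Next I would read off the stationary distribution of a single block from Example \ref{ex_gen_kin}: after the relabelling $S_1\mapsto S_j$, the network $\cG_j$ is precisely the CRN treated there, its irreducible components are the level sets $\{x_2+x_j=N\}$, and it carries a product-form stationary distribution
\[
\pi_j(x_2,x_j)=\frac{1}{Z}\,f_j(x_j)\,f_2^{(j)}(x_2),\qquad f_2^{(j)}(x_2)=\frac{d_j^{x_2}}{\prod_{l=1}^{x_2}\theta_2(l)},
\]
with $f_j$ the explicit function of Example \ref{ex_gen_kin} built from $\theta_j$ and the four rate constants of $\cG_j$, and $d_j>0$ free. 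The key structural observation is that the factor $f_2^{(j)}$ attached to the shared species $S_2$ depends only on $\theta_2$ and on the free constant $d_j$ — not on $\theta_j$ nor on the rates of $\cG_j$ — so, $\theta_2$ being common to all blocks, the choice $d_j=1$ for all $j\in J$ makes the functions $f_2^{(j)}$ literally coincide. Hence the compatibility hypothesis of Corollary \ref{mult_appl}, that on each pairwise intersection of species sets the product-form factors agree up to a positive scalar, holds with $\a=1$, the only species ever shared being $S_2$.

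Applying Corollary \ref{mult_appl} then yields that on any irreducible component of $\cG$ the stationary distribution is the product $\pi(x)=Z^{-1}\big(\prod_{j\in J}f_j(x_j)\big)f_2(x_2)$, provided this expression is summable. Summability is automatic here: the all-ones vector is a strictly positive conservation vector for $\cG$ (every reaction of every block preserves the total molecule count), so $\cG$ is conservative, its irreducible components are finite, the displayed expression is a finite sum, and it is moreover the unique stationary and limiting distribution by Remark \ref{rmk_pos_cons}. Since nothing in this argument imposed any relation among the rate constants of the blocks or among the $\theta_j$, the conclusion holds for all positive rates and all admissible kinetics, as claimed.

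The only point requiring genuine care — and the place where I would slow down — is the bookkeeping in the second paragraph: identifying each block with the CRN of Example \ref{ex_gen_kin} under $S_1\leftrightarrow S_j$ so that it is really the $S_2$-factor, depending solely on the fixed $\theta_2$, that is shared across blocks, rather than a factor carrying the block-specific kinetics $\theta_j$. Everything else is a direct appeal to Corollary \ref{mult_appl} together with the conservativity observation.
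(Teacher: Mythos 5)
Your argument is correct and is essentially the paper's own: the corollary is stated as an immediate consequence of Example \ref{ex_gen_kin} assembled via Theorem \ref{patch_CRN3} (equivalently, Corollary \ref{mult_appl}), which is exactly what you do — decompose into the two-reaction blocks, observe that the shared $S_2$-factor $d^{x_2}/\prod_{l=1}^{x_2}\theta_2(l)$ depends only on the fixed $\theta_2$ and can be matched by taking $d_j=1$, and note that summability is automatic since the network is conservative. Your write-up merely makes explicit the bookkeeping the paper leaves implicit.
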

\begin{remark}\label{comp_cx}[Compatibility with complex balance in $S_2$]\hfill\break
Let $\cG_1$ be a RN obtained from Corollary \ref{suff1} and $\cG_2$ be a weakly reversible, deficiency zero CRN that is conservative with kinetic functions as $\theta_2$ for species $S_2$ such that the only species in common between $\cG_1,\cG_2$ is $S_2$. Then, by \cite[Theorem 6.1]{anderson2}, Corollary \ref{suff1} and Theorem \ref{patch_CRN3} the CRN $\cG_1\cup\cG_2$ has product form stationary distribution 
with the product-form function in $S_2$ given by $f_2(x)=\frac{c_2^x}{\prod_{l=1}^{x}\theta_2(l)}$, where $c\in \R^{\cS_2}_{\geq 0}$ is a point of complex balance for $\cG_2$. 
\end{remark}
\begin{remark}
Note that it is usually not the case that RNs with different kinetics can be joined with matching product-form functions. This comes from the fact that for example if restricting to complex balanced CRNs with Mass-action kinetics and stochastic Michaelis-Menten kinetics, the corresponding product-form functions are different(see \cite[Theorem 6.1]{anderson2}).
\end{remark}

 \bibliographystyle{plain}

 \bibliography{references} 
 
\end{document}